\newtheorem{thm}{Theorem}
\newtheorem{lem}{Lemma}
\newcommand{\expect}[1]{\mathbb{E}\left[#1\right]}
\newcommand{\script}[1]{{{\cal{#1} }}}
\newcommand{\transpose}{\top}
\newcommand{\given}{\left\vert\right.}
\newcommand{\Conv}{\mbox{Conv}}
\begin{document}

\title
  {A Converse Result on Convergence Time  for Opportunistic Wireless Scheduling}
\author{Michael J. Neely \\ University of Southern California\\ \url{https://viterbi-web.usc.edu/~mjneely/}
\thanks{A partial version of this work was accepted for presentation at the IEEE INFOCOM conference, 2020 \cite{neely-converse-opportunistic-infocom2020}.} 
\thanks{This work was supported in part by one or move of: NSF
CCF-1718477, NSF SpecEES 1824418.}
}

\markboth{}{Neely}

\maketitle

\begin{abstract} 
This paper proves an impossibility result for 
stochastic network utility maximization for multi-user wireless systems, including multiple access and broadcast systems.   Every time slot an
access point observes the current channel states for each user and 
opportunistically selects a vector of transmission rates.  Channel state vectors are assumed to be independent and identically distributed with an unknown probability distribution.   The goal is to learn to make decisions over time that maximize a concave utility function of the running 
time average transmission rate of each user. Recently it was shown that a stochastic Frank-Wolfe algorithm converges to utility-optimality with an error of $O(\log(T)/T)$, where $T$ is the time the algorithm has been running.  An existing $\Omega(1/T)$ converse 
is known. The current paper improves the converse to $\Omega(\log(T)/T)$, which matches the known achievability result.  It does this by constructing a particular (simple) system for which no algorithm can achieve a better performance. The proof uses a novel reduction of the opportunistic scheduling problem to a problem of estimating a Bernoulli probability $p$ from independent and identically distributed samples. Along the way we refine a regret bound for Bernoulli estimation to show that, for any sequence of estimators, the set of values $p \in [0,1]$ under which the estimators perform poorly has measure at least $1/6$.  
\end{abstract}

\section{Introduction} 

This paper establishes the fundamental learning rate for network utility maximization in 
wireless opportunistic scheduling systems, such as multiple access systems 
and broadcast systems.  The recent work \cite{neely-frank-wolfe-ton} 
shows that a stochastic Frank-Wofe algorithm 
with a vanishing stepsize achieves a utility optimality gap that decays like $O(\log(T)/T)$, where 
$T$ is the time the algorithm is in operation.  It does this without a-priori knowledge of the channel state probabilities. This paper establishes a matching converse. A simple example 
system is constructed for which 
all algorithms have an error gap of at least $\Omega(\log(T)/T)$.    Specifically, 
we construct a system with channel states parameterized by an unknown probability $q \in [0,1]$ such that for any algorithm, there is a set $\script{Q} \subseteq [0,1]$ with measure at least $1/6$ under which the algorithm performs poorly.  This is done by a novel reduction of the opportunistic scheduling problem to a problem of estimating a Bernoulli probability $p$ from independent and identically distributed (i.i.d.)  Bernoulli samples.  Along the way, a refined statement regarding the regret of Bernoulli estimation is developed. 

A general structure for the class of opportunistic scheduling systems is as follows:  
The system is assumed to operate over slotted time $t \in \{0, 1, 2, \ldots\}$.  There are $n$ users. 
Every slot $t \in \{0, 1, 2, \ldots\}$ an access point allocates a vector $X[t]=(X_1[t], \ldots, X_n[t])$ for transmission of independent data belonging to each user.  In the case of 
wireless multiple access systems, the $n$ users transmit their data over uplink channels to the access point. It is assumed they  use a coordinated scheme that allows successful decoding of all transmissions at the scheduled bit rates $X[t]$. In the case of wireless broadcast systems, the access point transmits data for each user over downlink channels 
at the scheduled bit rates $X[t]$. 

The set of all transmission rate
vectors that are available on a particular slot $t$ can change 
from one slot to the next.  This can arise from time-varying connection properties
such as channel states that vary due to device 
mobility.
We model this time-variation by a \emph{random state vector} $S[t] \in \mathbb{R}^m$ that is observed by the access point at the start of every slot $t$ (where $m$ is a positive integer that can be different from $n$).   Assume that  $\{S[t]\}_{t=0}^{\infty}$ is i.i.d. over slots with some distribution $F_{S}(s)=P[S[0]\leq s]$ for all $s \in \mathbb{R}^m$ (where inequality is taken entrywise).  The distribution function $F_S(s)$ is unknown.   Define $\script{D}(S[t])$ as the \emph{decision set} for slot $t$, being the set of all $(X_1[t], \ldots, X_n[t])$ vectors that can be chosen on slot $t$ when the channel state vector is $S[t]$.   

The structure of $\script{D}(S[t])$ depends on the network.   For example, a multiple access network might allow only one user to transmit per slot. In this case we 
can define $S[t]=(S_1[t], \ldots, S_n[t])$ as a vector of \emph{channel states}, 
where $S_i[t]$ represents the transmission rate available to user $i$ on slot $t$ if that user is selected for transmission.  
Then $\script{D}(S[t])$ is a set that contains $n$ vectors:  
$$ \script{D}(S[t]) = \{(S_1[t],0,0,...,0), (0,S_2[t], 0, ..., 0), ..., (0,0,...,0,S_n[t))\}$$
where the $i$th vector in this set corresponds to choosing user $i$ for transmission. 
More sophisticated wireless signaling schemes  
can allow the set $\script{D}(S[t])$ to contain vectors with multiple positive components. The 
set $\script{D}(S[t])$ can be uncountably infinite in the case when transmission rates depend on an uncountably infinite set of power allocation levels that are available for scheduling. 

Every slot $t \in \{0, 1, 2, \ldots\}$ the access point observes $S[t]$ 
and chooses $X[t] \in \script{D}(S[t])$ in such a way that, over time, 
the following problem is solved: 
\begin{align}
&\mbox{Maximize:} \quad \liminf_{T\rightarrow\infty} \phi\left(\frac{1}{T}\sum_{t=0}^{T-1}\expect{X[t]}\right) \label{eq:p1} \\
&\mbox{Subject to:} \quad X[t] \in \script{D}(S[t]) \quad \forall t \in \{0, 1, 2, \ldots\}\label{eq:p2} 
\end{align}
where $\phi(x_1, \ldots, x_n)$ is a given real-valued utility function of 
the average user transmission rates. The function $\phi$ is assumed to be concave and entrywise nondecreasing.   Let $\phi^*$ be the optimal utility value, which  considers all possible algorithms that operate over an infinite time horizon, including algorithms that have perfect knowledge of the probability distribution $F_S$, and even including \emph{non-causal} algorithms that have knowledge of future states $\{S[t]\}_{t=0}^{\infty}$.\footnote{The only assumption about the algorithms over which $\phi^*$ is optimized is that the algorithms are \emph{probabilistically measurable} so that they produce random vectors $X[t]$ with well defined expectations on all slots $t$, see \cite{sno-text} for details. This is a mild assumption. It precludes (impractical) algorithms that make decisions using the axiom of choice on every slot $t$ to produce non-measurable $X[t]$ vectors.}     It is challenging to design a (causal) 
scheduling algorithm that achieves 
a utility that is close to $\phi^*$, particularly when the distribution $F_S$ is unknown to the network controller.   Algorithms
that are causal (so that they have no knowledge of the future) 
and that have no a-priori knowledge of the distribution $F_S$ shall be called \emph{statistics-unaware} algorithms.  This paper establishes the fundamental convergence delay required for any statistics-unaware algorithm to achieve utility that is close to the optimal value $\phi^*$.

A general statistics-unaware algorithm may incorporate some type of learning or estimation of the distribution $F_S$ or some functional of this distribution.  Observations of past channel states can be exploited when making online decisions. Consider some statistics-unaware algorithm that  
makes decisions over time $t \in \{0, 1, 2,\ldots\}$.  For each positive integer $T$, the expression 
$$ \phi\left(\frac{1}{T}\sum_{t=0}^{T-1} \expect{X[t]}\right)$$ 
is the utility associated with running the algorithm over the first $T$ slots $\{0, 1, 2, \ldots, T-1\}$. 
This utility includes decisions $X[t]$ made at each step of the way (including the decision $X[0]$ made at time $t=0$ based only on the observation $S[0]$). Decisions must be made intelligently at each step of the way and fast learning is crucial. How close can the achieved utility  get to the optimal value $\phi^*$?  What time $T$ is required?

\subsection{Example utility functions} \label{section:linear}

Different concave utility functions can be used to provide different types of performance (with corresponding fairness properties).  For example, consider the linear utility 
$$ \phi(x_1, \ldots, x_n) = \sum_{i=1}^n a_ix_i$$
where $a_1, \ldots, a_n$ are given nonnegative weights.  Under this utility function, the problem \eqref{eq:p1}-\eqref{eq:p2} seeks to maximize a weighted sum of average transmission rates of each user.   This linear utility is a trivial special case: The statistics-unaware algorithm of observing $S[t]$ at the start of every slot $t$ and 
choosing $X[t] \in \script{D}(S[t])$ to greedily maximize $\phi(X[t])$, called the \emph{greedy algorithm},  can be shown to lead to immediate convergence.\footnote{The existence of a maximizer for $\phi(x)$ over all $x \in \script{D}(S[t])$ holds under the mild additional assumption that $\phi$ is continuous and $\script{D}(S[t])$ is  a 
compact subset of $\mathbb{R}^n$ for each slot $t$.} This is because the time average expectation can be pushed inside the linear function $\phi$ and so maximization of immediate rewards translates into maximization of long term rewards. 

This is not the case for concave but \emph{nonlinear} utility functions.  This is because 
the goal is to maximize a concave function of the time average, not to maximize the average of a concave function.  This goal is crucial to network fairness.
The greedy algorithm can be far from optimal for general concave but nonlinear utility functions.\footnote{Consider a 2-user example with utility $\phi(x_1,x_2)=\log(x_1)+\log(x_2)$, a function known to have \emph{proportional fairness} properties \cite{kelly-charging}\cite{kelly-shadowprice}.  Suppose there is no channel variation and  $\script{D}(S[t]) = \{(20,0), (0,19)\}$ for all $t$. 
Choosing a fraction of time $p$ to transmit with user $1$ and maximizing
$\log(20p) + \log(19(1-p))$, which is the utility function applied to the long term transmission rate vector,  leads to $p^*=1/2$ and yields a long term transmission
rate vector $\left(\overline{X}_1, \overline{X}_2\right) = (10, 9.5)$. In contrast, the greedy strategy chooses user 1 on every slot so that $\left(\overline{X}_1, \overline{X}_2\right) = (20,0)$ and  user 2 never gets a chance to transmit!} 

From a fairness perspective, linear utilities are undesirable.  For example, suppose there are two users, at most one user can transmit per slot, and user 1 always has a strictly better channel condition than user 2 (perhaps because user 1 is closer to the access point).  Maximizing the linear utility function $\phi(x_1,x_2)=x_1+x_2$  results in the algorithm that  \emph{always} chooses user 1, so that user 2 receives a time average rate of 0. One way to be fair to user 2 is to change the utility function to 
$$ \phi(x_1,x_2) = \min[x_1,x_2]$$
Under this concave (but non-smooth) utility function, the problem is to maximize the minimum average rate given to the users. Another common type of (smooth) utility function is 
$$ \phi(x_1, x_2) = \log(x_1) + \log(x_2) $$
This logarithmic utility function results in a type of fairness called 
\emph{proportional fairness} \cite{kelly-charging}\cite{kelly-shadowprice}.  The logarithmic utility function is often modified to remove the singularity at zero: 
$$ \phi(x_1,x_2) = \log(1 + c x_1) + \log(1+c x_2) $$
where $c>0$ is a constant. Large values of $c$ can be used to 
approximate the $\log(x_1)+\log(x_2)$ function. Other types of concave and nonlinear
utility functions can be used for other types of fairness, such as $\alpha$-fair utility 
functions \cite{mo-walrand-fair}\cite{altman-alpha-fair}\cite{chiang-axiomatic-fairness}. 

\subsection{Prior work}

The work  \cite{prop-fair-down}\cite{vijay-allerton02} develops statistics-unaware
Frank-Wolfe type algorithms (with various step size rules) 
for solving the problem \eqref{eq:p1}-\eqref{eq:p2} for smooth utility functions using a fluid limit analysis.  An alternative statistics-unaware 
drift-plus-penalty algorithm of \cite{neely-fairness-ton}\cite{sno-text} 
can be used to solve \eqref{eq:p1}-\eqref{eq:p2} for smooth or 
nonsmooth utility functions, and this achieves utility within $\epsilon$ of optimality 
with convergence time $O(1/\epsilon^2)$. 
Drift-plus-penalty can also be used for extended problems of multi-hop networks with power minimization and constraints \cite{neely-energy-it}, and related algorithms for these extended problems are in \cite{atilla-primal-dual-jsac}\cite{atilla-fairness-ton}\cite{stolyar-greedy}\cite{stolyar-gpd-gen}. 

 Recent work in \cite{neely-frank-wolfe-ton} shows that, for smooth utility functions, 
a Frank-Wolfe algorithm with a constant stepsize also has convergence time $O(1/\epsilon^2)$, while a Frank-Wolfe algorithm with a 
vanishing stepsize yields an improved $O(\log(1/\epsilon)/\epsilon)$ convergence time. In particular, in the latter case we obtain 
$$  \phi\left(\frac{1}{T}\sum_{t=0}^{T-1} \expect{X[t]}\right) \geq \phi^* - \frac{c\log(T)}{T} \quad \forall T \in \{1, 2, 3, ...\}$$ 
where $c>0$ is a particular system constant.  The work \cite{neely-frank-wolfe-ton} 
also provides a near-matching converse of $\Omega(1/T)$. The problem of closing the logarithmic gap between the achievability bound and the converse bound was left as an open question. We resolve that open question in this paper by showing that the $O(\log(T)/T)$ gap is optimal. 

A related logarithmic convergence time result is developed by Hazan and Kale in 
 \cite{hazan-kale-stochastic} for the context of online convex optimization with strongly convex objective functions.  The prior work \cite{hazan-kale-stochastic} provides an example online convex optimization problem that immediately reduces to a problem of estimating a Bernoulli probability from i.i.d. Bernoulli samples.  They then provide a deep analysis of the Bernoulli estimation problem to show, via a nested interval argument, that for any sequence of Bernoulli estimators there exists a probability $p \in [1/4, 3/4]$ under which the estimators have a sum mean square error that grows at least logarithmically in the number of samples.  This prior work inspires the current paper.  We show that certain opportunistic scheduling problems can also be reduced to Bernoulli estimation; then we can use the Bernoulli estimation result of \cite{hazan-kale-stochastic}. However, this reduction is not obvious.  Online convex optimization problems have a different structure than opportunistic scheduling problems and the same reduction techniques cannot be used.  New techniques are used to establish the converse, including a novel reduction of the opportunistic scheduling problem to a Bernoulli estimation problem.  
 
 A conceptually related problem on the fundamental time required to minimize  a deterministic convex function with noisy observations of the gradients/subgradients is treated in \cite{hazan-kale-stochastic} (where an achievability result is developed concerning the error versus time)  and from an information theoretic perspective in \cite{best-stochastic} (where converse results are developed); see also early work on computational complexity for this problem in \cite{Nem-Yudin}. 
 At a high level, the convergence time and learning concepts for that problem are similar to the current paper.  However, the structure and analysis of that problem is quite different. For example:  (i) Unlike the problem of this paper, the fundamental asymptotic tradeoffs for that problem depend significantly on whether or not the convex function is strongly convex; (ii) The fundamental tradeoffs for that problem do not have the same logarithmic properties as the problem of the current paper.   
 
 Prior work on fundamental bounds on the time and accuracy for 
 estimation problems  is in, for example, \cite{risk-bounds-learning} for classification 
 problems, and work in  \cite{jiao-minimax}\cite{jiao-minimax2} treats bounds for 
 estimation of functionals of discrete distributions.

 \subsection{Our contributions}

\begin{enumerate} 

\item This paper proves an $\Omega(\log(T)/T)$ converse  for opportunistic scheduling. This matches an existing achievability result and resolves the open question in \cite{neely-frank-wolfe-ton} to show that this performance is optimal. 

  \item This paper shows that \emph{strongly concave} utility functions 
 cannot be used to improve the asymptotic 
 convergence time for opportunistic scheduling problems in comparison to  functions that are concave but not strongly concave. This is surprising because strong convexity/concavity  provides convergence improvements in other contexts, including 
 online convex optimization problems \cite{kale-universal}\cite{online-convex}, deterministic  minimization via 
 gradient descent  \cite{nesterov-book}, and deterministic minimization via stochastic gradients \cite{best-stochastic}\cite{Nem-Yudin}\cite{hazan-kale-stochastic}.   This emphasizes the unique properties of 
 opportunistic scheduling problems.
 
 \item The technique for reducing opportunistic scheduling to Bernoulli estimation 
 can more broadly impact future work on more complex networks (see open questions in this direction in the conclusion). 
 
 \item This paper refines the regret analysis for Bernoulli estimation theory in \cite{hazan-kale-stochastic} to show that for any sequence of estimators, not only does \emph{there exist} 
 a probability $p \in [1/4,3/4]$ for which the regret grows at least logarithmically,  but 
 \emph{the set of all such values $p$ has measure at least $1/6$}.  
 This is used 
 to establish a $1/6$ result for opportunistic scheduling: If any particular statistics-unaware algorithm is used, and if nature selects the channel according to a Bernoulli process with parameter $p$ that is independently chosen over the unit interval, then   with probability at 
 least $1/6$ the 
 algorithm will be limited by the $\Omega(\log(T)/T)$ converse bound.  Shouldn't algorithms \emph{always} be limited by this  bound? No.  Imagine a scheduling algorithm that makes an \emph{a-priori guess} $\hat{q} \in [0,1]$ about the true network probability $q$, and then makes decisions that are optimal under the assumption that the guess is exact.  In the ``lucky'' situation when $\hat{q}=q$, this algorithm would perform optimally and would not be 
 limited by the $\Omega(\log(T)/T)$ converse.  Nevertheless, our analysis shows that 
 every algorithm (including algorithms that attempt to make lucky guesses) will fail to 
 beat the $\Omega(\log(T)/T)$ converse with probability at least $1/6$.
  
 \end{enumerate} 

%\subsection{Paper outline} 
%
%Section \ref{section:Bernoulli-estimation} presents details on Bernoulli estimation.  Section \ref{section:converse} establishes the converse result on opportunistic scheduling. 

\section{Bernoulli estimation}  \label{section:Bernoulli-estimation} 

This section gives preliminaries on  estimating an unknown probability $p \in [0,1]$ from i.i.d. Bernoulli samples $\{W_n\}_{n=1}^{\infty}$ with 
$$ P[W_n=1]=p, \quad P[W_n=0]=1-p$$

\subsection{Estimation functions} \label{section:estimation-functions}

Let $\{W_n^p\}_{n=1}^{\infty}$ be a sequence of i.i.d. Bernoulli random variables with $P[W_n^p=1]=p$ (called a \emph{\emph{Bernoulli-$p$} sequence}).  The value of $p \in [0,1]$ is unknown.  On each time step $n$ we observe the value of $W_n^p$ and then make an estimate of $p$ based on all observations that have been seen so far. 

A general estimation method can be characterized as follows: 
Let $\{\hat{A}_n\}_{n=1}^{\infty}$ be an infinite sequence of
functions such that each function $\hat{A}_n(u,w_1, ..., w_n)$ 
maps a binary-valued sequence $(w_1, ..., w_n) \in \{0,1\}^n$ and a random seed $u \in [0,1)$  to a real 
number in the interval $[0,1]$. That is, for all $n\in \{1, 2, 3, ...\}$ we have 
\begin{equation} \label{eq:estimation-functions}
\hat{A}_n:[0,1) \times \{0,1\}^n \rightarrow [0,1]
\end{equation} 
The $\hat{A}_n$  functions shall be called \emph{estimation functions}.  Let $U$ be a random variable that is uniformly distributed over $[0,1)$ and that is independent of $\{W_n^p\}_{n=1}^{\infty}$.  
For $n \in \{1, 2, \ldots\}$, let $A_n^p$ denote the estimate of $p$ based on observations of $W_1^p, \ldots, W_n^p$: 
\begin{equation}
A_n^p = \hat{A}_n(U, W_1^p, W_2^p, \ldots, W_{n}^p) \quad \forall n \in \{1, 2, 3, \ldots\} \label{eq:Ap}
\end{equation}
The random variable $U$ is used to facilitate possibly randomized decisions.\footnote{A real number has infinite precision.  Thus, in principle, a single random variable $U$ that is uniform over $[0,1)$ is sufficient to generate all desired randomness for a sequence of randomized estimates. Indeed, from $U$ we can create an i.i.d. sequence $\{U_i\}_{i=1}^{\infty}$ of uniformly distributed random variables such that each $U_i$ is a deterministic function of $U$. To do this, first obtain an i.i.d. sequence of digits $\{D_i\}_{i=1}^{\infty}$ by writing $U$ in its decimal expansion $U=\sum_{n=1}^{\infty} D_n10^{-n}$. Then let $f:\mathbb{N}^2\rightarrow\mathbb{N}$ be a bijection and define $U_i = \sum_{j=1}^{\infty} D_{f(i,j)}10^{-j}$ for each $i \in \{1, 2, 3, \ldots\}$.}  The functions $\hat{A}_n$ in \eqref{eq:estimation-functions} are only  assumed to be \emph{probabilistically measurable} so that   $A_n^p$  defined in \eqref{eq:Ap} is a valid random variable for all $n \in \{1, 2, 3, \ldots\}$.

For a given sequence of estimator functions, let 
$A_n^p$ be the estimate at time $n$, as defined by 
\eqref{eq:Ap}.  
For each $n \in \{0, 1, 2, \ldots\}$ define $\mathbb{E}_p[(A_n^p-p)^2]$ 
as the mean square estimation error at time  
$n$. 
The expectation is with respect to the random seed $U$ 
and the random Bernoulli sequence.  The random seed $U$ 
is assumed to be independent of the sequence of Bernoulli 
variables $\{W_n^p\}_{n=1}^{\infty}$. 
Thus, if we condition on $U=u$ for a 
particular $u \in [0,1)$, the conditional 
expectation $\mathbb{E}_p[(A_n^p-p)^2 \given U=u]$
is with respect to the probability measure associated only with 
the random vector $(W_1^p, \ldots, W_{n}^p)$.  
%For $n \in \{1, 2,3, \ldots\}$, this
%measure is defined as $B_n^p$. 
For a given random seed 
$u \in [0,1)$ and for two different probabilities $p,q \in [0,1]$, 
the values $\mathbb{E}_p[(A_n^p-p)^2 \given U=u]$ and $E_q[(A_n^q-q)^2 \given U=u]$ are the mean square errors at time $n$ 
associated with the \emph{same deterministic estimation function $\hat{A}_n$} but assuming a Bernoulli-$p$ process and a Bernoulli-$q$ process, respectively.  The following theorem is due to Hazan and Kale in \cite{hazan-kale-stochastic}.

%The lemma shows that, after a certain number of observations,  
%if a deterministic estimator has mean absolute error 
%less than $\epsilon/8$ when the Bernoulli probability is $p$, it must have absolute error at least $\epsilon/8$ when the Bernoulli probability is $q$. 

\begin{thm}  \label{thm:hazan-bernoulli-estimation} (Bernoulli estimation from \cite{hazan-kale-stochastic}) 
Fix any sequence of measurable  
estimation functions $\{\hat{A}_n\}_{n=1}^{\infty}$ 
of the form \eqref{eq:estimation-functions}.  
There is a probability $p \in [1/4, 3/4]$ such that 
$$ \sum_{n=1}^N \mathbb{E}_p[(A_n^p-p)^2] \geq \Omega(\log(N)) \quad \forall N \in \{1, 2, 3, ...\}$$
where $A_n^p$ is defined by \eqref{eq:Ap}.  
\end{thm}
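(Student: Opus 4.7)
The plan is to follow the nested-interval (dyadic) adversarial construction of Hazan and Kale \cite{hazan-kale-stochastic}, with the lower bound pinned to a single probability in $[1/4,3/4]$. I would build a decreasing sequence of intervals $I_0 \supseteq I_1 \supseteq I_2 \supseteq \cdots$ with $I_0 = [1/4,3/4]$ and $|I_k| = 2^{-k-1}$, where at each stage $k$ the interval $I_k$ is bisected and one proceeds into the sub-interval whose midpoint $p_k$ gives the larger conditional mean square error for the fixed estimator sequence $\{\hat{A}_n\}$ over a designated block of times $n \in (\tau_{k-1}, \tau_k]$ with $\tau_k = \lfloor c\, 4^k \rfloor$ for a small constant $c>0$. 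By compactness (nested closed intervals of vanishing length) these intervals pin down a single probability $p^\star \in [1/4,3/4]$, at which the claimed logarithmic lower bound will be established.

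For the per-stage lower bound I would invoke a Le Cam two-point argument. The two candidate probabilities compared at stage $k$ differ by $\delta_k = \Theta(2^{-k})$ and both lie in $[1/4,3/4]$, so the KL divergence between the $n$-fold product Bernoulli laws is at most $C \, n \, \delta_k^2$ for a constant $C$ depending only on the bounded-away-from-$\{0,1\}$ condition. Pinsker's inequality then bounds the total variation distance between the joint laws of $(U, W_1^{p_k}, \ldots, W_n^{p_k})$ under the two hypotheses by $\sqrt{C n \delta_k^2 / 2}$, which is at most $1/4$ whenever $n \leq c\, 4^k$ for $c$ small enough. A standard reduction using the fact that the squared error is a metric-like penalty then yields that, for whichever of the two midpoints was chosen adversarially, $\mathbb{E}_{p_k}[(A_n^{p_k}-p_k)^2] \geq \kappa\, 4^{-k}$ for all $n$ in the block $(\tau_{k-1}, \tau_k]$, and summing over the $\Theta(4^k)$ indices in that block contributes $\Omega(1)$ per stage.

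The last ingredient is to transfer the per-stage bound from the stage-$k$ midpoint $p_k$ to the limit probability $p^\star$. For each fixed $n$, the mean square error $\mathbb{E}_p[(A_n^p - p)^2]$ is a polynomial of degree at most $n$ in $p$ (since the estimator is a measurable function of $U$ and at most $n$ Bernoulli observations), and $|p_k - p^\star| \leq 2^{-k-1}$. A change-of-measure bound using the same KL/Pinsker estimate as in the previous paragraph shows that on the time block $(\tau_{k-1}, \tau_k]$ the MSE at $p^\star$ differs from the MSE at $p_k$ by at most a small constant fraction of $4^{-k}$, preserving the $\Omega(4^{-k})$ lower bound. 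Summing over the $\Theta(\log N)$ stages with $\tau_k \leq N$ gives $\sum_{n=1}^N \mathbb{E}_{p^\star}[(A_n^{p^\star} - p^\star)^2] \geq \Omega(\log N)$, as required.

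The main obstacle I anticipate is precisely this last transfer step: each stage of the construction produces a lower bound at a \emph{different} probability $p_k$, and the theorem requires the bound to hold simultaneously for all $N$ at a \emph{single} $p^\star$. One must argue uniformly in $n$ that shifting from $p_k$ to $p^\star$ does not erase the $\Omega(4^{-k})$ gain on the corresponding block, which is delicate because the estimator is allowed to depend in an arbitrary measurable way on both the seed $U$ and all past samples. The KL-based change-of-measure argument sketched above handles this, but the bookkeeping (choice of the constant $c$, the block endpoints $\tau_k$, and the Pinsker slack) must be arranged so that the per-stage constants do not deteriorate as $k$ grows.
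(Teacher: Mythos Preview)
The paper does not give its own proof of this theorem; it attributes the result to Hazan and Kale and only remarks that their argument uses ``nested interval techniques'' (the paper's own contribution, Theorem~\ref{thm:positive-measure}, is proved in Appendix~B by an averaging argument over $p\in[1/4,3/4]$ rather than by nested intervals). Your proposal is therefore a reconstruction of the Hazan--Kale argument, and at the structural level it is correct: geometric blocks $\tau_k\asymp 4^k$, a two-point Le~Cam comparison at separation $\Theta(2^{-k})$ on each block, and Pinsker to control the total variation (this is the content of the paper's Lemmas~\ref{lem:c-lemma} and~\ref{lem:Bernoulli-estimation}).

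The gap is exactly where you suspected, and your proposed resolution does not close it as written. You claim that a KL/Pinsker change of measure shows the per-$n$ MSE at $p^\star$ differs from that at $p_k$ by $O(4^{-k})$. But change of measure gives only
\[
\bigl|\,\mathbb{E}_{p^\star}[g]-\mathbb{E}_{p_k}[g]\,\bigr|\;\le\;2\|g\|_\infty\cdot v(B_n^{p^\star},B_n^{p_k}),\qquad g=(\hat A_n-p_k)^2,
\]
and here $\|g\|_\infty=O(1)$ while the total variation is a small \emph{constant} (not $O(4^{-k})$), so the additive error is $O(1)$ per term and swamps the $\Omega(4^{-k})$ lower bound you are trying to preserve. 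The same obstruction appears if you go through the triangle inequality: with your halving geometry, the Le~Cam deviation threshold $|m_L-m_R|/2$ and the localization radius $|p^\star-p_k|\le|I_k|/2$ are of the \emph{same} order $2^{-k-2}$, so on the event $\{|A_n-p_k|\ge |m_L-m_R|/2\}$ one can only conclude $|A_n-p^\star|\ge 0$.

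The repair is to work at the level of the Le~Cam \emph{probability} bound rather than the MSE bound, and to arrange the geometry so that the localization radius is a strict fraction of the deviation threshold. For example, at stage $k$ compare the two \emph{endpoints} of $I_{k-1}$ (separation $|I_{k-1}|$) rather than the two half-midpoints, choose the adversarial endpoint $p_k$, and take $I_k$ to be the sub-interval of length $|I_{k-1}|/4$ adjacent to $p_k$. Then $|p^\star-p_k|\le|I_{k-1}|/4$ while the Le~Cam threshold is $|I_{k-1}|/2$, so on the good event $|A_n-p^\star|\ge|I_{k-1}|/4$; the TV perturbation between $p_k$ and $p^\star$ now only shifts the event's \emph{probability} by a small constant, yielding $\mathbb{E}_{p^\star}[(A_n-p^\star)^2]\ge c\,4^{-k}$ for all $n$ in the block. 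After this adjustment your summation over $\Theta(\log N)$ stages goes through.
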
 

It is important
to distinguish the result of Theorem \ref{thm:hazan-bernoulli-estimation} from the Cramer-Rao estimation bound  (see, for example, \cite{cover-thomas}).  The Cramer-Rao bound is most conveniently applied to 
\emph{unbiased estimators}. While biased 
versions of the Cramer-Rao bound exist, they require additional structural assumptions, such as knowledge of a (differentiable) 
bias function $b'(p)$ with a derivative that is bounded away from $-1$ so that a term $(1+b'(p))^2$ does not vanish.  Moreover, Cramer-Rao bounds are typically applied to a 
single estimator for time step $n$. In contrast, 
the Hazan and Kale theorem above treats the \emph{sum}
mean square error over a \emph{sequence} of estimators, which is essential for establishing connections to the regret of online scheduling algorithms.  

%There is nothing special about the interval $[1/4, 3/4]$.  The same result 
Using the nested interval techniques of \cite{hazan-kale-stochastic}, 
the asymptotic bound $\Omega(\log(N))$ 
in Theorem \ref{thm:hazan-bernoulli-estimation} can be written as an explicit function $b\log(N)-c$ where $b$ and $c$ are system constants that do not depend on $N$.  Unfortunately, there is a 
minor constant factor error in Lemma 15 of \cite{hazan-kale-stochastic}.  That constant minor factor error does not affect correctness of the  $\Omega(\log(N))$ result established in \cite{hazan-kale-stochastic}.\footnote{It should be emphasized that the techniques in \cite{hazan-kale-stochastic} are novel and deep and are in no way diminished by this constant factor error.}  
For convenience, the minor 
error is identified and fixed in Appendix A. 

\subsection{Positive measure in the unit interval} 

Theorem \ref{thm:hazan-bernoulli-estimation} shows that for any sequence of Bernoulli estimators, there is a probability $p \in [1/4, 3/4]$ under which the estimators perform poorly, in the sense of 
having sum mean square error that grows at least logarithmically in $N$.  The next theorem 
shows that, not only does such a probability $p$ exist,  
the set of all such probabilities $p$ is measurable and has measure at least $1/6$ within the interval $[1/4,3/4]$. It also  generalizes to treat arbitrary 
powers of absolute error. % (including mean square error). 
For each $\alpha>0$ define: 
$$ V_m(\alpha)= \sum_{n=1}^m (1/n)^{\alpha/2} \quad \forall m \in \{1, 2, 3, \ldots\} $$
Notice that
$$ V_m(\alpha) \geq  \int_1^{m+1} (1/t)^{\alpha/2} dt =  \left\{ \begin{array}{ll}
\log(m+1) &\mbox{ if $\alpha=2$} \\
\frac{(m+1)^{1-\alpha/2}-1}{1-\alpha/2} & \mbox{ if $\alpha \neq 2$} 
\end{array}
\right.$$

\begin{thm} \label{thm:positive-measure}  Fix $\alpha \in (0, 2]$.  Fix any sequence of measurable  
estimation functions $\{\hat{A}_n\}_{n=1}^{\infty}$ 
of the form \eqref{eq:estimation-functions}.  Define $\script{Q} \subseteq [1/4, 3/4]$ 
as the set of all $p \in [1/4, 3/4]$ such that:
\begin{equation} 
\limsup_{m\rightarrow\infty} \left[\frac{1}{V_m(\alpha)}\sum_{n=1}^m \mathbb{E}_p[|A_n^p-p|^{\alpha}]\right] \geq \frac{1}{c^{\alpha}2^{3+2\alpha}} \label{eq:in-particular} 
\end{equation} 
where $c=\sqrt{8/3}$. 
Then the set $\script{Q}$ is Lebesgue measurable and has measure $\mu(\script{Q}) \geq 1/6$.  
Thus, a randomly and uniformly chosen $p \in [1/4, 3/4]$ will satisfy the $\limsup$ inequality \eqref{eq:in-particular} with probability at least $1/3$.   In particular, the inequality \eqref{eq:in-particular} implies: 
\begin{itemize} 
\item If $\alpha=2$ then
$$ \limsup_{m\rightarrow\infty} \left[\frac{1}{\log(m+1)}\sum_{n=1}^m \mathbb{E}_p[|A_n^p-p|^{2}]\right] \geq \frac{3}{2^{10}} $$
\item If $\alpha=1$ then
$$ \limsup_{m\rightarrow\infty} \left[\frac{1}{[(m+1)^{1/2}-1]}\sum_{n=1}^m \mathbb{E}_p[|A_n^p-p|]\right] \geq \frac{1}{2^{4}\sqrt{8/3}} $$ 
\end{itemize} 
\end{thm}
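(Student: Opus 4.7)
The plan is to combine a per-sample two-point lower bound with a Fubini-plus-Markov argument and a reverse-Fatou step. For each $n$, define the single-sample bad set
$$
\script{B}_n = \left\{p \in [1/4, 3/4] : \mathbb{E}_p\bigl[|A_n^p - p|^\alpha\bigr] \geq \frac{K_0}{n^{\alpha/2}}\right\}
$$
for an explicit constant $K_0 > 0$. I would show $\mu(\script{B}_n) \geq 1/4 - O(1/\sqrt n)$ via a Le Cam two-point argument: for any pair $(p, p+\delta) \subseteq [1/4, 3/4]$ with $\delta = 1/(c\sqrt{2n})$, the bound $KL(p,p+\delta) \leq c^2 \delta^2$, valid on $[1/4, 3/4]$ since $p(1-p) \geq 3/16$ (with $c = \sqrt{8/3}$), combined with Pinsker's inequality gives $\|P_p^{\otimes n} - P_{p+\delta}^{\otimes n}\|_{TV} \leq 1/2$, and the standard two-point Le Cam inequality then yields $\max(\mathbb{E}_p[|A_n^p - p|^\alpha], \mathbb{E}_{p+\delta}[|A_n^{p+\delta} - (p+\delta)|^\alpha]) \geq \delta^\alpha / 2^{\alpha+2}$. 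Consequently the complement $[1/4, 3/4] \setminus \script{B}_n$ cannot contain both $p$ and $p+\delta$ for any $p$, so this set and its $\delta$-translate in $\mathbb{R}$ are disjoint, which forces $\mu([1/4, 3/4] \setminus \script{B}_n) \leq 1/4 + \delta/2$.

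Second, I would convert the uniform per-sample bound into a weighted-sum statement. Define
$$
W_p(m) = \sum_{n=1}^m \frac{1}{n^{\alpha/2}} \mathbf{1}[p \in \script{B}_n].
$$
By Fubini, $\int_{1/4}^{3/4} W_p(m)\,dp = \sum_{n=1}^m n^{-\alpha/2} \mu(\script{B}_n) \geq V_m(\alpha)/4 - o(V_m(\alpha))$, so the average over uniform $P \in [1/4, 3/4]$ of $Y_p := W_p(m)/V_m(\alpha) \in [0, 1]$ is at least $1/2 - o(1)$. Applying the mean-tail inequality $\Pr(Y \geq t) \geq (\mathbb{E}[Y] - t)/(1 - t)$ at threshold $t = 1/4$ gives $\Pr(Y_p \geq 1/4) \geq 1/3 - o(1)$, hence
$$
G_m = \left\{p \in [1/4, 3/4] : W_p(m) \geq V_m(\alpha)/4\right\}
$$
has Lebesgue measure at least $1/6 - o(1)$ for all large $m$. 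The threshold $t=1/4$ is chosen precisely so the mass bound evaluates to the stated $1/6$.

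Third, reverse Fatou for sets on the finite-measure ambient space $[1/4, 3/4]$ gives
$$
\mu(\limsup_m G_m) \geq \limsup_m \mu(G_m) \geq 1/6.
$$
For any $p \in \limsup_m G_m$, the inclusion $p \in G_m$ holds for infinitely many $m$, and for each such $m$ the definition of $\script{B}_n$ yields
$$
\frac{1}{V_m(\alpha)} \sum_{n=1}^m \mathbb{E}_p\bigl[|A_n^p - p|^\alpha\bigr] \geq \frac{K_0}{V_m(\alpha)} W_p(m) \geq \frac{K_0}{4}.
$$
Tracing constants in the Le Cam step at $\alpha = 2$ gives $K_0/4 = 3/2^{10} = 1/(c^2 2^7)$, exactly the claimed $1/(c^\alpha 2^{3+2\alpha})$; thus $\limsup_m G_m \subseteq \script{Q}$ and $\mu(\script{Q}) \geq 1/6$. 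The $\alpha = 2$ and $\alpha = 1$ corollaries then follow by substituting the preamble's explicit lower bounds on $V_m(\alpha)$.

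The main technical obstacle is matching the stated constant $1/(c^\alpha 2^{3+2\alpha})$ uniformly in $\alpha \in (0, 2]$ rather than only at the tight endpoint $\alpha = 2$. My Pinsker-based Le Cam estimate delivers a constant of the form $1/(c^\alpha 2^{3\alpha/2 + 4})$, which coincides with the target only at $\alpha = 2$; for general $\alpha$ one either replaces Pinsker with a sharper Hellinger bound (exploiting $\|P - Q\|_{TV} \leq H(P, Q)$ and the Bhattacharya-coefficient expansion for Bernoulli pairs) or jointly optimizes the separation $\delta$ and the Markov threshold as functions of $\alpha$, invoking the refined constants from Appendix A to plug any residual slack. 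The overall structure (per-sample Le Cam $\to$ weighted Fubini $\to$ mean-tail Markov $\to$ reverse Fatou) and the $1/6$ measure bound are unaffected by this fine-tuning.
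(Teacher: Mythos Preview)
Your overall architecture matches the paper's proof in Appendix~B almost exactly: a per-$n$ two-point total-variation bound, integration over $p\in[1/4,3/4]$, a mean-tail (Markov) step to obtain a probability lower bound $\geq 1/3$, and a reverse-Fatou passage to the $\limsup$. The one substantive technical difference is how the per-$n$ information is aggregated. You threshold with an indicator $\mathbf{1}[p\in\script{B}_n]$ and use the \emph{max} form of the two-point bound, whereas the paper defines the truncated risk
\[
f_n(p)=\min\Bigl[\mathbb{E}_p[|A_n^p-p|^\alpha],\ \tfrac{\epsilon[n]^\alpha}{2^{1+\alpha}}\Bigr],\qquad \epsilon[n]=\tfrac{1}{2c\sqrt{n}},
\]
and uses the \emph{sum} form $\mathbb{E}_p+\mathbb{E}_{p+\epsilon[n]}\geq \epsilon[n]^\alpha/2^{1+\alpha}$ directly. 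The truncation preserves the actual size of the risk up to the cap, so when $\mathbb{E}_p$ and $\mathbb{E}_{p+\epsilon}$ are each roughly half the cap the paper still collects the full sum, while your indicator loses one of the two contributions. This is precisely the source of your constant deficit: with the truncation device the paper obtains $1/(c^\alpha 2^{3+2\alpha})$ uniformly in $\alpha\in(0,2]$, so no Hellinger refinement or joint $\delta$-optimization is needed. If you swap your indicator $W_p(m)$ for $H_m=\sum_{n\le m} f_n(p)$ (which is still deterministically bounded by $V_m(\alpha)/(c^\alpha 2^{1+2\alpha})$, so your mean-tail step applies unchanged), the constant falls out for all $\alpha$.

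Two smaller points. First, the claim $KL(p,p+\delta)\leq c^2\delta^2$ is the second-order Taylor coefficient, not a rigorous inequality on $[1/4,3/4]$; the paper's Lemma in Appendix~A proves $KL\leq 2c^2\delta^2$ via $\log(1+x)\leq x$, which forces the separation $\delta=\tfrac{1}{2c\sqrt{n}}$ rather than $\tfrac{1}{c\sqrt{2n}}$ to get total variation $\leq 1/2$. Second, you omit the measurability of $\script{Q}$; the paper handles this by writing $\mathbb{E}_p[|A_n^p-p|^\alpha]$ as a finite sum over $\{0,1\}^n$ of polynomials in $p$ times integrals over the random seed, hence measurable in $p$, and then observing that $\limsup$ of measurable functions exceeding a threshold defines a measurable set.
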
 
\begin{proof} 
See Appendix B.\footnote{The conference version of this paper in \cite{neely-converse-opportunistic-infocom2020} stated a weaker version of Theorem 2 that showed $\mu(\script{Q})\geq 1/8$ (based on an earlier version of this technical report). The current technical report proves the stronger result $\mu(\script{Q})\geq 1/6$.
An unfortunate error in the conference version  \cite{neely-converse-opportunistic-infocom2020} stated Theorem 2 as holding for $\alpha>0$, whereas it should be stated to hold for $0<\alpha \leq 2$, which ensures $\lim_{m\rightarrow\infty} V_m(\alpha) = \infty$ and $\lim_{m\rightarrow\infty} V_m(\alpha+1)/V_m(\alpha)=0$ (used in the proof of Theorem 2).  The case $\alpha \in (0, 2]$ is the main case of interest for asymptotic tradeoff analysis:  For $\alpha>2$  the sum regret does not necessarily go to infinity with the number of samples because, for the simple estimator $A_n^p= \frac{1}{n}\sum_{j=1}^n W_j^p$,  we have $\sum_{n=1}^{\infty} \mathbb{E}_p[|A_n^p-p|^{\alpha}]<\infty$.} 
\end{proof} 

Appendix C has details on the tightness of these lower bounds by comparing to a particular 
estimation method  that achieves regret to within a constant factor of these bounds.

\section{The Converse Bound} \label{section:converse}

This section constructs a simple 2-user opportunistic scheduling system with state vectors $S[t]$ described by a single probability parameter $q \in [1/4, 3/4]$.  It produces a converse bound on the utility optimality gap by mapping the problem to a Bernoulli estimation problem and then using Theorem \ref{thm:hazan-bernoulli-estimation} and Theorem \ref{thm:positive-measure}. 

\subsection{The example 2-user system} 

Consider a 2-user wireless system that operates in slotted time $t \in \{0, 1, 2, \ldots\}$. Suppose
the system state is described by a sequence of i.i.d. Bernoulli variables $\{S[t]\}_{t=0}^{\infty}$ with 
$$ P[S[t]=1] = q; \quad P[S[t]=0] = 1-q$$
where $q \in [0,1]$ is an unknown probability. Every slot $t \in \{0, 1, 2, \ldots\}$ the system controller observes $S[t]$ and
chooses a \emph{transmission rate vector} $X[t] = (X_1[t], X_2[t])$ from 
a decision set $\script{D}(S[t])$ given by
\begin{equation} \label{eq:decision-set} 
\script{D}(S[t]) = \left\{ \begin{array}{ll}
\{(1,0)\} &\mbox{ if $S[t]=0$} \\
\{(r, 1-r^2) : r \in [0,1]\}  & \mbox{ if $S[t]=1$} 
\end{array}
\right.
\end{equation} 
In particular, if $S[t]=0$ then the controller has no choice but to allocate $X[t]=(1,0)$, which gives no transmission rate to user 2.  On the other hand, if $S[t]=1$ then the controller is free to allocate $X[t]=(r, 1-r^2)$ for some $r \in [0,1]$, which allows giving a nonzero transmission rate to user 2.   Observe that under any system state and any decision, it holds 
that $0\leq X_1[t] \leq 1$, $0\leq X_2[t]\leq 1$, and $X_2[t]=1-X_1[t]^2$ 
for all $t$. The set of all points $(X_1[t], X_2[t]) \in \script{D}(1)$ available when $S[t]=1$ is shown as the solid curve in Fig. \ref{fig:repcurve}. 

\begin{figure}[htbp]
   \centering
   \includegraphics[height=3in]{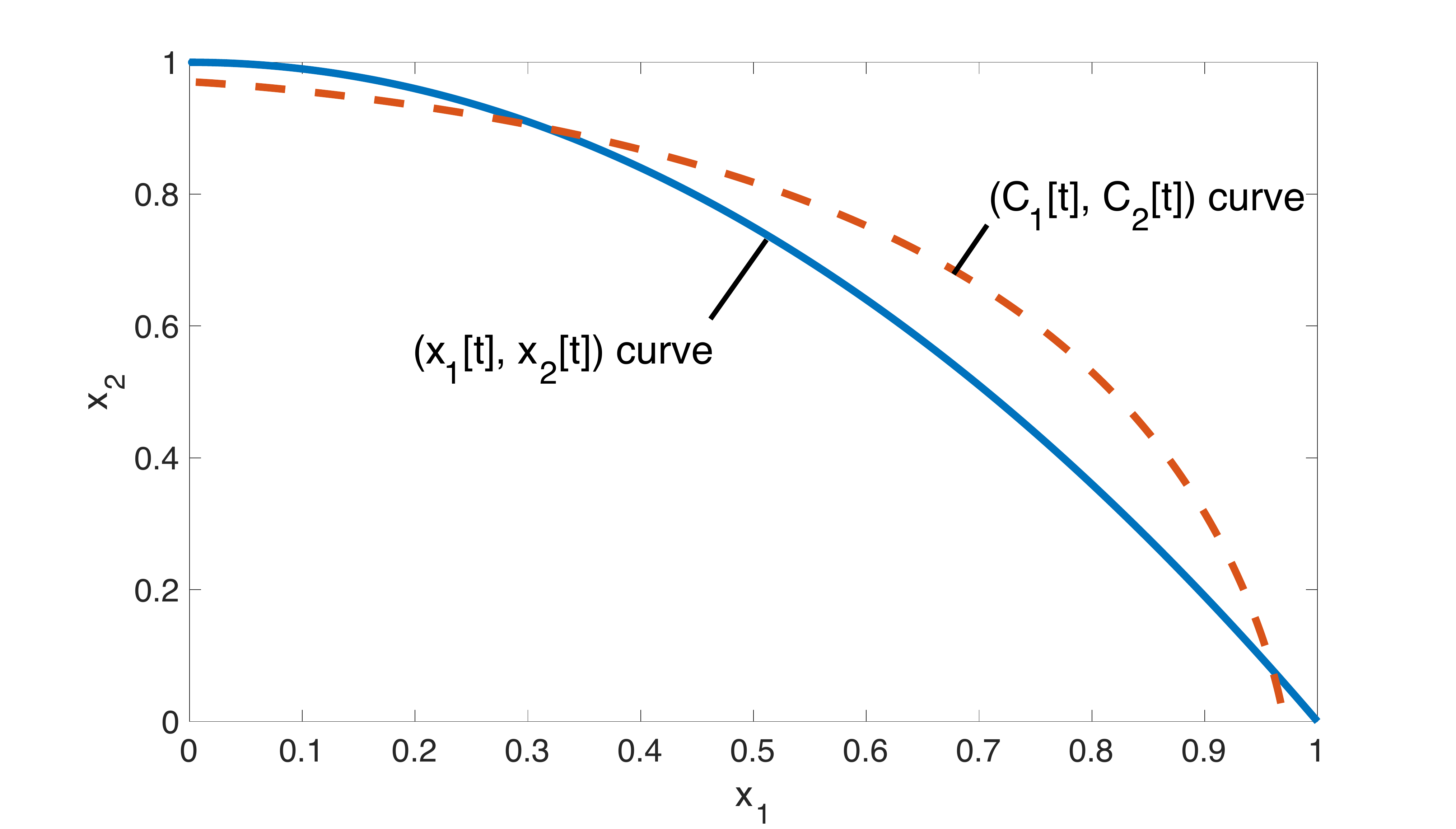} % requires the graphicx package
   \caption{An illustration of the decision set $\script{D}(1)$ as compared to an alternative logarithmic decision set.  The solid curve shows the
   $(X_1[t], X_2[t])$ points in  $\script{D}(1)$ when $S[t]=1$; the 
   dashed curve shows the points $(C_1[t], C_2[t])$ defined by \eqref{eq:log1}-\eqref{eq:log2}.}
   \label{fig:repcurve}
\end{figure}

While this example decision set $\script{D}(S[t])$ 
is very specific, it is representative of the following physical scenario:
Imagine that user $2$ goes offline independently every slot $t$ with probability $1-q$ (possibly 
due to a time-varying channel condition, or because it allocates its resources to other tasks according to a randomized schedule).  Hence, user 1
can allocate a full rate of 1 on those slots (corresponding to slots $t$ such that $S[t]=0$).  On the other hand, during the slots  in which users 1
and 2 are both online (corresponding to $S[t]=1$), the users can simultaneously transmit but, due to interference, they cannot both transmit at the full rate of 1.  During such slots $t$ for which $S[t]=1$, there is a tradeoff between the rates $X_1[t]$ and $X_2[t]$ that can be allocated, so that $X_2[t]$ is a nonincreasing function of $X_1[t]$.  The particular nondecreasing function $X_2[t] = 1-X_1[t]^2$ that is used is shown in Fig. \ref{fig:repcurve}. This function is chosen for mathematical convenience (it simplifies the proof to be given).   Similar proofs can be given for curves that are qualitatively
similar but that have more physical meaning:  For example, for slots $t$ such that $S[t]=1$, 
suppose the total bandwidth available is $B$ and the rates of users 1 and 2 are chosen by allocating fractions of the bandwidth $\theta_1[t]$ and $\theta_2[t]$ to users 1 and 2, so that user 1 is allocated a total bandwidth of $B\theta_1[t]$, user 2 is allocated a total bandwidth 
of $B\theta_2[t]$, and $\theta_1[t],\theta_2[t]$ are chosen as 
nonnegative values that sum to 1.  The users thus transmit over frequency-separated channels.  Assuming each channel is an additive white Gaussian noise channel (with noise density 
uniform over the given frequency spectrum) 
and given these particular frequency division allocations on slot $t$,  
the point-to-point Shannon capacity of each channel is \cite{cover-thomas}: 
\begin{align}
 C_1[t] &= \theta_1[t] B \log\left(1 + \frac{P}{\theta_1[t] N}\right) \label{eq:log1} \\
 C_2[t] &= \theta_2[t] B \log\left(1 + \frac{P}{\theta_2[t]N}\right)\label{eq:log2} 
 \end{align}
where $P$ and $N$ are fixed positive parameters. The expression $\frac{P}{\theta_i[t]N}$ represents the signal-to-noise ratio for channel $i \in \{1,2\}$ and the noise $\theta_iN$  is proportional to the bandwidth used on channel $i$. 

The $(C_1[t], C_2[t])$ values traced out by all possible $(\theta_1[t], \theta_2[t])$ allocations are given in Fig. \ref{fig:repcurve} for a particular choice of parameters $B=0.7, P/N=3$.  The mathematical curve is different from 
the curve $(r, 1-r^2)$, but it is qualitatively similar. In particular, like the curve $(r, 1-r^2)$, 
it can be shown to have a \emph{strongly concave structure}. 
The proof of our converse can be extended to apply to this particular $(C_1[t], C_2[t])$
curve, and to similar curves that are strongly concave. 
We use the curve $(r,1-r^2)$ because it is simple and yields the most direct proof of the desired converse result.

\subsection{The example network utility maximization problem} 

For positive integers $T$ define:
$$ \overline{X}_i(T) = \frac{1}{T}\sum_{t=0}^{T-1} X_i[t]  \quad \forall i \in \{1,2\}$$
and define $\overline{X}(T) = (\overline{X}_1(T), \overline{X}_2(T))$. 
Let $\phi:[0,1]^2 \rightarrow \mathbb{R}$ be a continuous and 
concave utility function.  The goal of the network controller is to allocate
$X[t]$ over time to solve
\begin{align}
\mbox{Maximize:} \quad & \liminf_{T\rightarrow\infty} \phi\left(\expect{\overline{X}(T)}\right) \label{eq:u1} \\
\mbox{Subject to:} \quad & X[t] \in \script{D}(S[t]) \quad \forall t \in \{0, 1, 2, \ldots\} \label{eq:u2} 
\end{align}
This problem indeed has the structure of \eqref{eq:p1}-\eqref{eq:p2}. 
Define $\phi^*$ as the optimal utility for the above problem. That is, $\phi^*$ is the supremum value of \eqref{eq:u1} over all algorithms that  satisfy \eqref{eq:u2}.   The specific utility function that we first consider is: 
$$ \phi(x_1, x_2) = \log(1+x_1) + \log(1+x_2)$$ 
This function is entrywise nondecreasing and $c$-\emph{strongly concave} over the domain $(x_1, x_2) \in [0,1]^2$ with parameter $c=1/4$, meaning that the function $s:[0,1]^2\rightarrow \mathbb{R}$ given by 
$$s(x_1,x_2) = \phi(x_1,x_2) + \frac{c}{2}(x_1^2+x_2^2)$$
is concave. 
The main result of the paper is below: 

\begin{thm} \label{thm:performance} Consider 
the 2-user example problem \eqref{eq:u1}-\eqref{eq:u2} with utility function $\phi(x_1,x_2)=\log(1+x_1) + \log(1+x_2)$ and with an unknown parameter $q \in [0,1]$.  Under any (possibly randomized) statistics-unaware control algorithm, there is a probability $q \in [1/4, 3/4]$ such that 
\begin{equation} \label{eq:performance1}
\phi\left(\expect{\overline{X}_1[T]}, \expect{\overline{X}_2[T]}\right) \leq \phi^* - \Omega\left(\frac{\log(T)}{T}\right) \quad \forall T \in \{1, 2, 3, \ldots\}
\end{equation} 
Furthermore, there is a measurable subset $\script{Q} \subseteq [1/4, 3/4]$ with measure at least $1/6$ such that if $q \in \script{Q}$ then 
\begin{equation} \label{eq:performance2}
\limsup_{T\rightarrow\infty} \frac{T\phi^*-T\phi\left(\expect{\overline{X}_1[T]}, \expect{\overline{X}_2[T]}\right)}{\log(T)} \geq \frac{3\beta^2}{2^{13}}
\end{equation}
where $\beta=\frac{2}{3}-\frac{\sqrt{7}}{6} \approx 0.2257$. 
\end{thm}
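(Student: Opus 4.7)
The plan is to reduce the scheduling problem to a Bernoulli estimation problem so that Theorems~\ref{thm:hazan-bernoulli-estimation} and~\ref{thm:positive-measure} apply. Given any statistics-unaware algorithm, I would parameterize its behavior by a sequence $\{A_n\}$, where $A_n \in [0,1]$ is the $r$-value the algorithm would pick on slot $n$ if $S[n]=1$, viewed as a function of the random seed $U$ and of $(S[0],\dots,S[n-1])$. Because $S[n]$ is independent of that history, each $A_n$ is a bona fide Bernoulli-$q$ estimation function in the sense of \eqref{eq:estimation-functions}. A direct computation yields
\[ \mathbb{E}_q[\bar{X}_1(T)] = (1-q) + q\bar{u}_T, \qquad \mathbb{E}_q[\bar{X}_2(T)] = q(1-\bar{v}_T), \]
with $\bar{u}_T = \frac{1}{T}\sum_{n=0}^{T-1}\mathbb{E}_q[A_n]$ and $\bar{v}_T = \frac{1}{T}\sum_{n=0}^{T-1}\mathbb{E}_q[A_n^2]$. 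Setting $g(r,q)=\log(2-q+qr)+\log(1+q-qr^2)$, I would first verify that the Pareto-optimal stationary policy gives $\phi^* = \max_{r\in[0,1]} g(r,q) = g(r^*(q),q)$, where $r^*(q)$ is the positive root of $3qr^2 + (4-2q)r - (1+q)=0$.

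The core of the argument is to lower-bound the utility gap in terms of the \emph{sum} mean-square error $\sum_n \mathbb{E}_q[(A_n - r^*(q))^2]$ rather than merely the squared-bias $(\bar{u}_T - r^*(q))^2$. I would decompose
\[ \phi^* - \phi(\mathbb{E}_q[\bar{X}(T)]) = \bigl[g(r^*(q),q) - g(\bar{u}_T,q)\bigr] + \bigl[\log(1+q-q\bar{u}_T^2) - \log(1+q-q\bar{v}_T)\bigr], \]
where the first bracket is at least $(c_1/2)(\bar{u}_T - r^*(q))^2$ by strong concavity of $g(\cdot,q)$ (I would verify that $g_{rr}''$ is bounded above by a negative constant uniformly on the relevant ranges of $r,q$), and the second bracket is at least $c_2(\bar{v}_T-\bar{u}_T^2)$ by convexity of $-\log$. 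Combined with the elementary identity
\[ (\bar{u}_T - r^*(q))^2 + (\bar{v}_T - \bar{u}_T^2) = \frac{1}{T}\sum_{n=0}^{T-1} \mathbb{E}_q[(A_n - r^*(q))^2], \]
this yields $\phi^* - \phi(\mathbb{E}_q[\bar{X}(T)]) \geq \frac{c}{T}\sum_n \mathbb{E}_q[(A_n - r^*(q))^2]$ with $c = \min(c_1/2,c_2) > 0$ (explicitly, $c \geq 1/8$ suffices).

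To close the loop I would invert $A_n$ into an estimator of $q$ itself. After showing that $h(q):=r^*(q)$ is smooth and strictly monotone on $[1/4,3/4]$ with $|h'(q)| \geq \ell > 0$ (the constant $\beta = (4-\sqrt{7})/6$ arises as $|h'(1)|$, which gives a uniform lower bound on the sub-interval), I set $B_n = \operatorname{proj}_{[1/4,3/4]}(h^{-1}(A_n))$. Each $B_n$ is then a valid $[0,1]$-valued Bernoulli-$q$ estimator, and non-expansiveness of the projection together with the $1/\ell$-Lipschitz property of $h^{-1}$ gives $(B_n - q)^2 \leq \ell^{-2}(A_n - r^*(q))^2$. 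Theorem~\ref{thm:hazan-bernoulli-estimation} applied to $\{B_n\}$ then yields the existence statement \eqref{eq:performance1}, while Theorem~\ref{thm:positive-measure} with $\alpha=2$ produces a measurable set $\script{Q} \subseteq [1/4,3/4]$ of measure at least $1/6$ on which $\sum_n \mathbb{E}_q[(B_n-q)^2]$ grows at least like $(3/2^{10})\log m$. Chaining through the factor $\ell^2 \geq \beta^2$ and the factor $c \geq 1/8$ recovers the stated constant $3\beta^2/2^{13}$ in \eqref{eq:performance2}.

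The main obstacle is the decomposition above: one must recognise that the utility depends on the second moment $\bar{v}_T$ and not merely on $\bar{u}_T$, and translate the excess $\bar{v}_T - \bar{u}_T^2$ into a time-summed variance so that, together with the squared-bias $(\bar{u}_T - r^*)^2$, it reconstitutes exactly $\frac{1}{T}\sum \mathbb{E}_q[(A_n - r^*(q))^2]$. Without this second term only an $\Omega(1/T)$ converse is obtainable --- precisely the gap left open in~\cite{neely-frank-wolfe-ton}. A secondary bookkeeping obstacle is the precise accounting of the constants $c_1$, $c_2$, and $\ell$ needed to reach $3\beta^2/2^{13}$, which requires working with the sub-interval $[1/4,3/4]$ and keeping track of uniform bounds on $g_{rr}''$ and $h'$ there.
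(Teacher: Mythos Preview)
Your proposal is correct and follows the same overall arc as the paper (reduce to Bernoulli estimation via the map $r^*=h(q)$, then invoke Theorems~\ref{thm:hazan-bernoulli-estimation} and~\ref{thm:positive-measure}), but the key inequality is obtained by a genuinely different decomposition. The paper linearizes $\phi$ at $x^*$ using the gradient inequality, then uses the first-order stationarity condition $\frac{1}{1+x_1^*}=\frac{2r}{1+x_2^*}$ so that the linear remainder becomes $\frac{1}{(1+x_2^*)T}\sum_t[2r(x_1[t]-x_1^*)+(x_2[t]-x_2^*)]$; a single per-slot completion-of-squares then collapses this to $-\frac{q}{(1+x_2^*)T}\sum_t\mathbb{E}[(z_{F[t]}-r)^2]$, giving the coefficient $q/(1+x_2^*)\geq 1/8$ in one stroke. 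Your route instead isolates a bias piece $g(r^*,q)-g(\bar u_T,q)$ (handled by strong concavity of $g(\cdot,q)$) and a variance piece $\log(1+q-q\bar u_T^2)-\log(1+q-q\bar v_T)$ (handled by concavity of $\log$), and then reassembles them through the identity $(\bar u_T-r^*)^2+(\bar v_T-\bar u_T^2)=\frac{1}{T}\sum_n\mathbb{E}[(A_n-r^*)^2]$. Both decompositions capture the same bias--variance structure; the paper's is slicker (one constant, one inequality) while yours is more modular (two constants $c_1,c_2$, minimized). A further difference: your estimator $A_n$ is the actual random $r$-value (a function of the seed $U$), whereas the paper uses the deterministic conditional mean $z_{F[t]}=\mathbb{E}[X_1[t]\mid S[t]=1,F[t]]$; both fit the estimator format of~\eqref{eq:estimation-functions}. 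One small technical point to tidy: as written, $h^{-1}(A_n)$ is undefined when $A_n\notin[h(0),h(1)]$; you should project $A_n$ onto $[h(0),h(1)]$ before applying $h^{-1}$ (the paper does exactly this), after which non-expansiveness of the projection and the $1/\beta$-Lipschitz bound on $h^{-1}$ give your inequality.
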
 

The proof of Theorem \ref{thm:performance} 
is developed in the following subsections. 
Observe that since the utility function $\phi(x_1,x_2)=\log(1+x_1)+\log(1+x_2)$ is smooth, 
the result in \cite{neely-frank-wolfe-ton} ensures that the statistics-unaware 
Frank-Wolfe algorithm with vanishing stepsize can be used, without 
knowledge of the parameter $q \in [0,1]$, to ensure that for all values $q \in [0,1]$ we have 
$$  \phi\left(\expect{\overline{X}_1[T]}, \expect{\overline{X}_2[T]}\right) \geq \phi^* - O\left(\frac{\log(T)}{T}\right) \quad \forall T \in \{1, 2, 3, \ldots\}$$
In particular, the $\log(T)/T$ asymptotic converse bound of Theorem \ref{thm:performance} can be achieved. Hence, the asymptotic $\log(T)/T$   bound is tight and the corresponding Frank-Wolfe algorithm (with vanishing stepsize) is asymptotically optimal. 

\subsection{Optimality over stationary policies} 

\begin{figure}[htbp]
   \centering
   \includegraphics[height=3in]{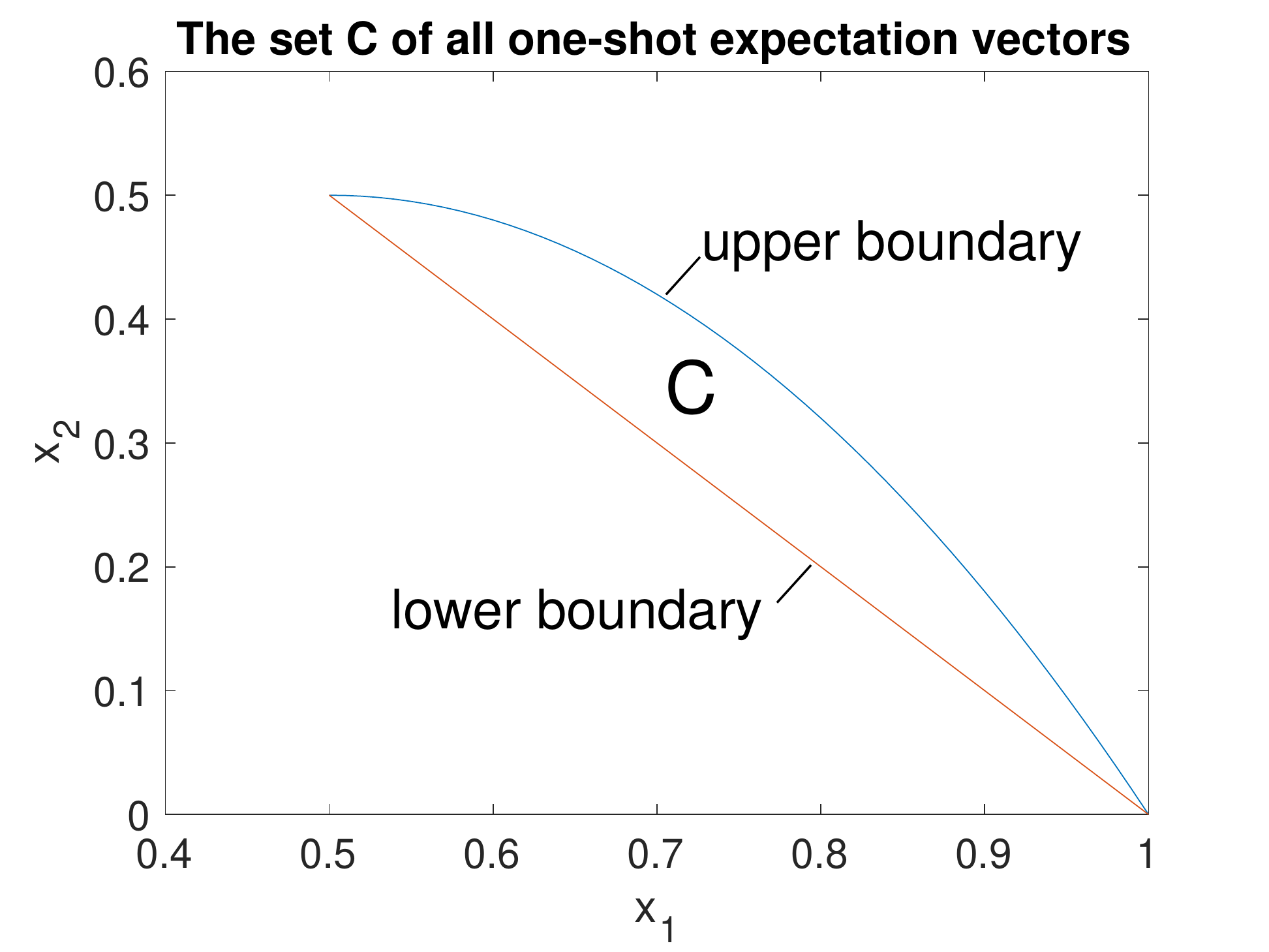} % requires the graphicx package
   \caption{The set $\script{C}$ of all one-shot expectations for the case $q=0.5$, which includes all points on or between the upper and lower boundary curves.  The lower boundary
   is determined by \eqref{eq:Cb} and the upper boundary is determined by \eqref{eq:Cc}.}
   \label{fig:ITplot}
\end{figure}

Results in \cite{sno-text} show that optimality for the problem \eqref{eq:u1}-\eqref{eq:u2} can be characterized by the set $\script{C}$ of all one-shot expectations $\expect{(X_1[0], X_2[0])}$ that can be achieved on slot $t=0$.  Consider the set of all $(x_1,x_2) \in \mathbb{R}^2$ that satisfy 
\begin{align}
&1-q \leq x_1 \leq 1 \label{eq:Ca}  \\
&x_1 + x_2 \geq 1 \label{eq:Cb} \\
&x_2 \leq 2(1-x_1) - \frac{1}{q}(1-x_1)^2 \label{eq:Cc} 
\end{align} 
The set of all such points is shown in Fig. \ref{fig:ITplot} for the case $q=0.5$. The next result shows that this set is equal to $\script{C}$. 

\begin{lem}  \label{lem:utility-one-shot}  (The set $\script{C}$)  Fix $q \in (0,1]$ and assume $\{S[t]\}_{t=0}^{\infty}$ is an i.i.d. Bernoulli process with $P[S[t]=1]=q$. The set $\script{C}$ of all one-shot expectations $\expect{(X_1[0],X_2[0])}$ achievable in the 2-user system is 
\begin{equation} \label{eq:C1} 
 \script{C} = \Conv(\{(1-q + qr, q(1-r^2))  \in \mathbb{R}^2 : r \in [0,1]\}) 
 \end{equation} 
where $\Conv(\cdot)$ denotes the convex hull.   This set $\script{C}$ is closed, bounded, convex, and is equivalently 
described as the set of all $(x_1, x_2) \in \mathbb{R}^2$ that satisfy 
the inequalities \eqref{eq:Ca}, \eqref{eq:Cb}, \eqref{eq:Cc}. 
\end{lem}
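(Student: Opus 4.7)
The plan is to prove the convex-hull characterization \eqref{eq:C1} directly from the structure of the decision sets, then verify that this convex hull coincides with the region described by inequalities \eqref{eq:Ca}--\eqref{eq:Cc} via an affine change of variables.

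First I would observe that any (possibly randomized) one-shot rule is a function of $S[0]$ and an independent random seed. Since $\script{D}(0)=\{(1,0)\}$, the rule outputs $(1,0)$ when $S[0]=0$, while on the event $S[0]=1$ it outputs a random vector $(R,1-R^2)$ for some $[0,1]$-valued random variable $R$. Independence of $S[0]$ from the seed yields
\begin{equation*}
\expect{X[0]} = (1-q)(1,0) + q\,\expect{(R,1-R^2)}.
\end{equation*}
As $R$ ranges over all $[0,1]$-valued random variables, $\expect{(R,1-R^2)}$ ranges over exactly $\Conv(\{(r,1-r^2):r\in[0,1]\})$: the inclusion $\subseteq$ holds because any expectation of points on the curve lies in their convex hull, and $\supseteq$ follows by using $R$ discretely supported on the (at most three) extreme points in a Carath\'eodory decomposition. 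Applying the affine map $y\mapsto (1-q)(1,0)+qy$ to this convex hull then yields \eqref{eq:C1}.

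Next I would verify the inequality description. Because $r\mapsto 1-r^2$ is strictly concave on $[0,1]$ with endpoints $(0,1)$ and $(1,0)$, its graph has convex hull
\begin{equation*}
\{(u_1,u_2)\in\mathbb{R}^2 : 0\le u_1\le 1,\; 1-u_1\le u_2\le 1-u_1^2\}.
\end{equation*}
The affine bijection $(u_1,u_2)\mapsto(1-q+qu_1,\,qu_2)$ sends $u_1\in[0,1]$ to $x_1\in[1-q,1]$, which gives \eqref{eq:Ca}. Substituting $u_1=(x_1-(1-q))/q$ and $u_2=x_2/q$ into $u_2\ge 1-u_1$ gives $x_2\ge 1-x_1$, i.e.\ \eqref{eq:Cb}. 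For the upper bound, the same substitution into $u_2\le 1-u_1^2$ gives $x_2\le q - (x_1-1+q)^2/q$; setting $v=1-x_1$ and expanding $(q-v)^2=q^2-2qv+v^2$ reduces this to $x_2\le 2v - v^2/q$, which is exactly \eqref{eq:Cc}.

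Closedness, boundedness, and convexity of $\script{C}$ then follow immediately from \eqref{eq:C1}, since $\script{C}$ is the affine image of the convex hull of a compact curve in $\mathbb{R}^2$, and such hulls are themselves compact and convex. The one step that needs genuine care is the algebraic conversion between the convex-hull form and the inequality form; the main conceptual input is that a strictly concave curve's convex hull equals the region between the curve and the chord joining its endpoints, after which everything else is a routine change of variables.
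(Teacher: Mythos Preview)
Your proof is correct. The paper argues via a sandwich $\Conv(\script{C}_1)\subseteq\script{C}\subseteq\script{C}_2$: the first inclusion by randomizing over deterministic policies indexed by $r$, the second by directly verifying each of \eqref{eq:Ca}--\eqref{eq:Cc} for an arbitrary achievable expectation (with Jensen's inequality on $\expect{X_1^2\mid S=1}$ supplying \eqref{eq:Cc}), and then closing the loop by observing that the upper boundary of $\script{C}_2$ is exactly $\script{C}_1$. You instead prove $\script{C}=\Conv(\script{C}_1)$ in one stroke---using that the set of expectations of a curve-valued random variable is precisely the convex hull of that compact curve---and then handle the equivalence with \eqref{eq:Ca}--\eqref{eq:Cc} purely geometrically via the concave-graph-versus-chord description plus the affine change of variables $(u_1,u_2)\mapsto(1-q+qu_1,qu_2)$. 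The mathematical content is the same (your Carath\'eodory/convex-hull step and the paper's Jensen step are two faces of the same convexity argument); your packaging is a bit more modular and makes the role of the affine map explicit, while the paper's version keeps everything in the $(x_1,x_2)$ coordinates throughout.
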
 
\begin{proof} 
Define the set $\script{C}_1$ by 
$$\script{C}_1 = \{(1-q + qr, q(1-r^2)) \in \mathbb{R}^2 : r \in [0,1]\}$$
The set $\Conv(\script{C}_1)$ corresponds to the set defined in the right-hand-side of  \eqref{eq:C1}.  The set $\script{C}_1$ is closed and bounded and so $\Conv(\script{C}_1)$ is convex, closed, and bounded. 
Define $\script{C}_2$ as the set of all points $(x_1, x_2)$ that 
satisfy the three inequality constraints \eqref{eq:Ca}, \eqref{eq:Cb}, \eqref{eq:Cc}. 
Define $\script{C}$ as the set of all expectations $\expect{(X_1[0], X_2[0])}$ 
achievable on slot $t=0$.  We want to show that $\Conv(\script{C}_1) = \script{C}=\script{C}_2$.

We first show  $\Conv(\script{C}_1) \subseteq \script{C}$.  
Fix $r \in [0,1]$.  Consider the following decision policy for slot $t=0$: 
$$ (X_1[0], X_2[0]) = \left\{\begin{array}{cc}
(1,0) & \mbox{ if $S[0]=0$} \\
(r, 1-r^2) & \mbox{ if $S[0]=1$}
\end{array}\right.$$ 
Then
\begin{align*}
\expect{(X_1[0],X_2[0])} &= (1,0)P[S[0]=0] + (r, 1-r^2)P[S[0]=1]\\
&= (1,0)(1-q) + (r, 1-r^2)q\\
&= ((1-q) + qr , q(1-r^2))
\end{align*}
Hence, any point in the set $\script{C}_1$ 
can be achieved as an expectation on slot $t=0$. 
Any point in $\Conv(\script{C}_1)$ can be achieved by randomizing over policies that achieve particular points in $\script{C}_1$.  Thus, $\Conv(\script{C}_1) \subseteq \script{C}$. 

We now show that $\script{C} \subseteq \script{C}_2$.   Consider any point $(x_1, x_2) \in \script{C}$.  By definition of $\script{C}$, the point $(x_1, x_2)$ can be achieved as an expectation on slot $t=0$. Let $(X_1[0], X_2[0])\in \script{D}(S[0])$ denote the (possibly randomized) decision vector on slot $0$  that satisfies 
$\expect{(X_1[0], X_2[0])} = (x_1, x_2)$. 
For simplicity of notation define $S=S[0]$ and $(X_1,X_2) = (X_1[0],X_2[0])$ so that: 
\begin{align*}
&(X_1, X_2) \in \script{D}(S)\\
&\expect{(X_1, X_2))} = (x_1, x_2)
\end{align*}
By the structure of the decision set $\script{D}(S)$ we have $X_1 \in [0,1]$ always, and 
\begin{equation} \label{eq:structure} 
(X_1, X_2) = \left\{\begin{array}{cc}
(1,0) & \mbox{ if $S=0$} \\
(X_1, 1-X_1^2) & \mbox{ if $S=1$}
\end{array}\right.
\end{equation} 
Since $P[S=1]=q$ we have 
\begin{align*}
x_1 &= \expect{X_1} \\
&= \expect{X_1|S=0}(1-q) + \expect{X_1|S=1}q\\
&= (1-q) + \expect{X_1|S=1}q 
\end{align*}
Thus
\begin{equation} \label{eq:foo1} 
x_1 = (1-q) + \expect{X_1|S=1}q 
\end{equation} 
Since $X_1 \in [0,1]$ we have $0\leq \expect{X_1|S=1}\leq 1$ and so \eqref{eq:foo1} implies that \eqref{eq:Ca} holds. 

To show that \eqref{eq:Cb} also holds, observe from \eqref{eq:structure} that
regardless of whether $S=0$ or $S=1$
we have:
$$ X_1 + X_2 \geq 1 $$
which holds because $X_1 \in [0,1]$ and $X_1^2\leq X_1$, 
and so 
$$\expect{X_1 + X_2} \geq 1$$
which implies that inequality \eqref{eq:Cb} holds. 

To show that \eqref{eq:Cc} holds, observe that 
\begin{align}
x_2 &= \expect{X_2} \nonumber \\
&\overset{(a)}{=} 0 P[S=0]+ \expect{1-X_1^2|S=1}P[S=1] \nonumber \\
&= q - q \expect{X_1^2|S=1}\nonumber \\
&\overset{(b)}{\leq} q-q\expect{X_1|S=1}^2\nonumber \\
&\overset{(c)}{=} q - q \left(\frac{x_1-1+q}{q}\right)^2 \\
&= 2(1-x_1) - \frac{(1-x_1)^2}{q}
\end{align}
where (a) holds by \eqref{eq:structure}; (b) holds by Jensen's inequality; 
(c) holds by \eqref{eq:foo1}. Thus, inequality \eqref{eq:Cc} holds. 

It follows that $\Conv(\script{C}_1) \subseteq \script{C} \subseteq \script{C}_2$.  Finally, 
it is not difficult to show that the upper boundary of the set $\script{C}_2$, defined by all points $(x_1, x_2)$ that satisfy $1-q\leq x_1\leq 1$ and that satisfy inequality \eqref{eq:Cc} with equality, is equal to the set $\script{C}_1$ (see Fig. \ref{fig:ITplot} for the case $q=0.5$).   Further, the convex hull of this upper boundary is the entire set $\script{C}_2$, so that $\Conv(\script{C}_1) = \script{C}_2$. 
\end{proof} 

Results in \cite{sno-text} imply that the optimal utility $\phi^*$ for problem \eqref{eq:u1}-\eqref{eq:u2} is equal to the
maximum of the continuous and concave function 
$\phi(x_1,x_2)$ over all $(x_1,x_2)$ in the closed, bounded, and convex set $\script{C}$. That is, 
$$ \phi^* = \max_{(x_1,x_2) \in \script{C}} \phi(x_1,x_2)$$
The utility function of interest can be shown to be strongly concave and so the maximizer $(x_1^*, x_2^*)$ is unique.  The maximizer  is given in terms of $q$
in the next lemma. 

\begin{lem} \label{lem:derivative-properties} (Optimal operating point) Fix $q \in (0,1]$.  Define 
$\phi:[0,1]^2\rightarrow\mathbb{R}$ 
by $\phi(x_1,x_2) = \log(1+x_1)+\log(1+x_2)$. The unique maximizer of $\phi(x_1,x_2)$ over $(x_1,x_2) \in \script{C}$ is a vector $(x_1^*,x_2^*)$ that satisfies 
\begin{equation} \label{eq:x-star} 
(x_1^*, x_2^*) = \left((1-q) + qr, q(1-r^2)\right)
\end{equation} 
for some particular $r \in [0,1]$, so that $\phi(x_1^*,x_2^*)=\phi^*$.  Furthermore the optimal value $r \in [0,1]$ in \eqref{eq:x-star} 
satisfies 
\begin{equation} \label{eq:derivative-equality}
\frac{1}{1+ x_1^*} = \frac{2r}{1+x_2^*}
\end{equation} 
and is exactly equal to 
\begin{equation} \label{eq:r-expression}
r = \frac{-(2-q) + \sqrt{4q^2-q+4}}{3q} 
\end{equation} 
The expression on the right-hand-side of \eqref{eq:r-expression} 
increases from $1/4$ to  $\frac{-1+\sqrt{7}}{3}$ as $q$ slides between $0$ and $1$, 
where $\frac{-1+\sqrt{7}}{3} \approx 0.54858$. 
\end{lem}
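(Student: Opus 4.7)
The plan is to push the optimization from the full convex region $\script{C}$ onto its upper boundary curve $\script{C}_1$, reduce to a one-dimensional calculus problem in $r$, and then read off the first-order condition both as the stated ratio identity and as an explicit quadratic in $r$.

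First, I would argue that the maximizer lies on $\script{C}_1$. Since $\phi(x_1,x_2) = \log(1+x_1)+\log(1+x_2)$ is strictly increasing in each coordinate and the sum of strictly concave functions is strictly concave, $\phi$ is strictly concave, so the max over the closed bounded convex set $\script{C}$ is attained at a unique point. That point must lie on the Pareto-maximal (upper) boundary of $\script{C}$, else one could increase $x_2$ while keeping $x_1$ fixed and strictly raise $\phi$. By Lemma~\ref{lem:utility-one-shot} that upper boundary is exactly the curve $\script{C}_1 = \{((1-q)+qr,\, q(1-r^2)) : r \in [0,1]\}$.

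Next, I would substitute this parametrization and differentiate. Define
\begin{equation*}
g(r) = \log(2-q+qr) + \log(1+q-qr^2), \qquad r \in [0,1].
\end{equation*}
Then $g'(r) = \tfrac{q}{2-q+qr} - \tfrac{2qr}{1+q-qr^2}$. Setting $g'(r)=0$ and noting $2-q+qr = 1+x_1^*$ and $1+q-qr^2 = 1+x_2^*$ yields exactly the claimed identity
\begin{equation*}
\frac{1}{1+x_1^*} = \frac{2r}{1+x_2^*}.
\end{equation*}
Strict concavity of $g$ (inherited from strict concavity of $\phi$ composed with an injective parametrization) gives uniqueness of the critical point, and a boundary check ($g'(0)>0$ and $g'(1)<0$ when $q\in(0,1]$) confirms that the maximizer is interior to $[0,1]$.

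Clearing denominators in the first-order condition gives $1+q-qr^2 = 2r(2-q+qr)$, which rearranges to
\begin{equation*}
3qr^2 + (4-2q)r - (1+q) = 0.
\end{equation*}
Applying the quadratic formula and taking the root in $[0,1]$ (the other root is negative) gives
\begin{equation*}
r = \frac{-(2-q)+\sqrt{(2-q)^2 + 3q(1+q)}}{3q} = \frac{-(2-q)+\sqrt{4q^2-q+4}}{3q},
\end{equation*}
matching \eqref{eq:r-expression}. For the endpoint claims, at $q=1$ one reads off $r=(-1+\sqrt7)/3$; as $q\downarrow 0$, I would expand $\sqrt{4q^2-q+4} = 2\sqrt{1-q/4+q^2} = 2 - q/4 + O(q^2)$, so the numerator is $3q/4 + O(q^2)$ and $r\to 1/4$.

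The main obstacle I anticipate is the monotonicity claim that $r(q)$ is strictly increasing on $(0,1]$. Direct differentiation of the closed form is clumsy, so I would instead implicitly differentiate the quadratic $h(r,q) := 3qr^2 + (4-2q)r - (1+q) = 0$, obtaining $r'(q) = -\tfrac{h_q}{h_r} = -\tfrac{3r^2 - 2r - 1}{6qr + 4 - 2q}$. The denominator is positive on $(0,1]\times[0,1]$ (bounded below by $4-2q>0$), and the numerator factors as $(3r+1)(r-1)$, which is strictly negative for $r\in[1/4,\,(-1+\sqrt7)/3]\subset[0,1)$. Hence $r'(q)>0$, which together with the endpoint values yields the stated monotone behavior and completes the proof.
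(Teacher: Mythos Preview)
Your proposal is correct and follows essentially the same route as the paper: reduce to the upper boundary, differentiate the one-variable function $g(r)=\log(2-q+qr)+\log(1+q-qr^2)$, read off the ratio identity, solve the resulting quadratic, and check the endpoint values; your implicit-differentiation argument for monotonicity simply makes explicit what the paper asserts with ``it can be checked.'' One small caveat: concavity of $g$ does not follow merely from ``strict concavity of $\phi$ composed with an injective parametrization'' (the map $r\mapsto(1-q+qr,\,q(1-r^2))$ is not affine); you should instead note directly that $\log(2-q+qr)$ is concave (log of an affine function) and $\log(1+q-qr^2)$ is concave (increasing concave function of the concave argument $1+q-qr^2$), so their sum is concave.
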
 
\begin{proof} 
Considering the set $\script{C}$ defined by \eqref{eq:C1}, it is clear that (because $\phi(x_1,x_2)$ is entrywise nondecreasing) a maximizer occurs on the 
upper boundary curve $(1-q+qr, q(1-r^2))$ for $r \in [0,1]$.  The utility function associated with $r \in [0,1]$ is 
$$ \log(2 - q + qr)  + \log(1 + q(1-r^2))$$
This is concave over $r \in [0,1]$. If a point of zero derivative can be found over $r \in [0,1]$ then that point must be optimal.  Taking a derivative with respect to $r$ and setting the result to $0$ yields
$$ \frac{q}{2-q+qr} + \frac{-2qr}{1+q(1-r^2)} =0$$
Since $q>0$, dividing by $q>0$ and rearranging terms gives
$$ \frac{1}{2-q+qr} = \frac{2r}{1+q(1-r^2)}$$
which yields \eqref{eq:derivative-equality} by the substitution $x_1^*=1-q+qr$, $x_2^*=q(1-r^2)$.  Rearranging the above equality again yields a quadratic equation in $r$ that is solved by taking the only nonnegative solution, which is given in \eqref{eq:r-expression}.  It can be checked that the expression in \eqref{eq:r-expression} increases from $1/4$ to  $\frac{-1+\sqrt{7}}{3}$ as $q$ slides between $0$ and $1$. In particular, a zero-derivative point $r \in [0,1]$ can indeed be found for all values of $q$ that are considered. 
\end{proof} 

\begin{lem} \label{lem:h-def} (The bijection $h$) Define the function $h:[0,1]\rightarrow[1/4, \frac{-1+\sqrt{7}}{3}]$ by 
\begin{equation} \label{eq:h} 
h(q) = \left\{ \begin{array}{ll}
1/4 &\mbox{ if $q=0$} \\
  \frac{-(2-q) + \sqrt{4q^2-q+4}}{3q}& \mbox{ otherwise} 
\end{array}
\right.
\end{equation} 
Then $q \in (0,1]$ implies $r=h(q)$ (where $r$ is defined in \eqref{eq:r-expression}). Further, 
 function $h$ is strictly  increasing, 
 so that it has an inverse function: 
 $$h^{-1}:\left[1/4, \frac{-1+\sqrt{7}}{3}\right]\rightarrow [0,1]$$ 
Finally, $h$ is continuously 
differentiable  and satisfies $h'(q)\geq h'(1)$ for all $q \in [0,1]$, 
and so defining $\beta>0$ by 
\begin{equation} \label{eq:beta} 
\beta = h'(1) = \frac{2}{3}-\frac{\sqrt{7}}{6} \approx 0.2257
\end{equation} 
we have the following  ``expansion'' property of $h$: 
\begin{equation} \label{eq:expansion} 
 |h(a)-h(b)|\geq \beta |a-b| \quad \forall a, b \in [0,1]
 \end{equation} 
\end{lem}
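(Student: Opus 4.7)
The first claim — that $r = h(q)$ for $q \in (0,1]$ — is immediate from comparing \eqref{eq:r-expression} with the definition of $h$. For the remaining claims, set $g(q) = \sqrt{4q^2 - q + 4}$ so that $h(q) = (q - 2 + g(q))/(3q)$ on $(0,1]$. A Taylor expansion gives $g(q) = 2 - q/4 + \tfrac{63}{64} q^2 + O(q^3)$, hence $h(q) = \tfrac{1}{4} + \tfrac{21}{64} q + O(q^2)$, which confirms $h(0^+) = 1/4$ and the one-sided derivative $h'(0^+) = 21/64$. On $(0,1]$, $h$ is smooth because $4q^2 - q + 4 > 0$ there, so $h \in C^1[0,1]$. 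Direct differentiation and simplification yield
\begin{equation*}
h'(q) \;=\; \frac{4 g(q) - (8 - q)}{6\, q^2\, g(q)} \qquad (q \in (0,1]).
\end{equation*}
Since $16 g(q)^2 - (8-q)^2 = 63 q^2 > 0$ and $4g(q) + (8-q) > 0$, we get $4g(q) > 8-q$, so $h'(q) > 0$ on $(0,1]$. Strict monotonicity, combined with $h(0) = 1/4$ and $h(1) = (-1+\sqrt 7)/3$ by direct substitution, yields the claimed bijection $h:[0,1]\to [1/4, (-1+\sqrt 7)/3]$.

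The main obstacle is showing $h'(q) \geq h'(1) = \beta$ on $[0,1]$. Substitution gives $h'(1) = (4\sqrt 7 - 7)/(6\sqrt 7) = (4-\sqrt 7)/6 = \beta$. For $q \in (0,1]$ the inequality $h'(q) \geq \beta$ rearranges to $g(q)\bigl[4 - (4-\sqrt 7)q^2\bigr] \geq 8 - q$; both sides are positive, so squaring yields
\begin{equation*}
P(q) \;:=\; (4q^2 - q + 4)\bigl[4 - (4-\sqrt 7)q^2\bigr]^2 - (8-q)^2 \;\geq\; 0.
\end{equation*}
Direct evaluation shows $P(0) = 0$ and $P(1) = 7\cdot 7 - 49 = 0$, so polynomial division produces $P(q) = q^2(q-1) R(q)$ for a cubic $R$. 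Since $q^2 \geq 0$ and $q - 1 \leq 0$ on $[0,1]$, it suffices to show $R(q) \leq 0$ on $[0,1]$. The leading and quadratic coefficients of $R$ work out to $92 - 32\sqrt 7 > 0$ and $69 - 24\sqrt 7 > 0$, so $R''(q) > 0$ on $[0, \infty)$ and $R$ is convex on $[0,1]$. A convex function on a closed interval attains its maximum at an endpoint, and direct evaluation gives $R(0) = 65 - 32\sqrt 7 < 0$ and $R(1) = 259 - 112\sqrt 7 < 0$, so $R(q) < 0$ throughout $[0,1]$. This establishes $h'(q) \geq \beta$ on $(0,1]$, and the bound extends to $q = 0$ because $h'(0^+) = 21/64 > \beta$.

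Finally, the expansion property \eqref{eq:expansion} follows from the mean value theorem applied to $h \in C^1[0,1]$: for any $a \neq b$ in $[0,1]$, there is $c$ strictly between them with $h(a) - h(b) = h'(c)(a - b)$, so $|h(a) - h(b)| = h'(c)|a - b| \geq \beta |a - b|$. The decisive step is the polynomial inequality: because it is tight at both endpoints of $[0,1]$, factoring $q^2(q - 1)$ out of $P$ reduces the problem to checking that a single cubic is nonpositive on $[0,1]$, which in turn follows from its convexity together with negative endpoint values.
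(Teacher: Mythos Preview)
Your proof is correct, and it takes a genuinely different route at the crucial step than the paper does. Both arguments compute the same derivative
\[
h'(q)=\frac{4g(q)-(8-q)}{6q^2g(q)},\qquad g(q)=\sqrt{4q^2-q+4},
\]
confirm $h'(0^+)=21/64$, and derive the expansion property from $h'\ge\beta$ via the mean value theorem (the paper uses the fundamental theorem of calculus, which amounts to the same thing). The divergence is in justifying $h'(q)\ge h'(1)$ on $[0,1]$. The paper simply asserts a qualitative shape---``$h'(q)$ at first increases but eventually decreases to reach a minimum value at $q=1$''---and leaves the verification to the reader. You instead clear denominators to reduce the inequality to the polynomial statement $P(q)\ge 0$, observe that $P$ vanishes to second order at $0$ and to first order at $1$, factor $P(q)=q^2(q-1)R(q)$, and then dispose of the cubic $R$ by noting its positive leading and quadratic coefficients force convexity on $[0,1]$ while both endpoint values are negative. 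Your approach is fully self-contained and more rigorous at exactly the point the paper glosses over; the paper's description, on the other hand, gives more qualitative information about the shape of $h'$ (unimodality) that your argument does not directly reveal. One small presentational remark: the sentence ``$P(0)=0$ and $P(1)=0$, so polynomial division produces $P(q)=q^2(q-1)R(q)$'' tacitly uses $P'(0)=0$ as well; this is immediate once one expands $P$ and sees the constant and linear terms cancel, but it would not hurt to say so explicitly.
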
 
\begin{proof} 
Taking a derivative of $h$ at $q \in (0,1]$ gives
$$ h'(q) = \frac{q-8+4\sqrt{4q^2-q+4}}{6q^2\sqrt{4q^2-q+4}}$$
This is continuous at all $q \in (0,1]$.  Further, 
$$ \lim_{q\rightarrow 0^+} h'(q) = \frac{21}{64}$$
This is consistent with the right-derivative of $h$ at $0$: 
$$ \lim_{q\rightarrow 0^+} \frac{h(q)-1/4}{q} = \frac{21}{64}$$
Thus, $h$ is continuously differentiable for all $q \in [0,1]$. 
From the expression for $h'(q)$ it follows that $h'(1) = \beta = \frac{2}{3}-\frac{\sqrt{7}}{6}$. 
As $q$ slides over the interval $[0,1]$, the function $h'(q)$ at first increases but eventually decreases
to reach a minimum value at $q=1$. Thus,   $h'(q) \geq  \beta$ for all $q \in [0,1]$. 
The expansion property \eqref{eq:expansion}  
follows by fixing $a,b \in [0,1]$, without loss of generality assuming $a<b$, and  using the fundamental theorem of calculus: 
\begin{align*}
h(b) -h(a) &=  \int_a^b h'(q)dq \\
&\geq  \int_a^b \beta dq \\
&=  \beta(b-a)
\end{align*}
where the inequality holds because $h'(q)\geq \beta$ for all $q \in [0,1]$.
\end{proof} 

\subsection{Statistics-unaware algorithms for utility maximization} 

This subsection completes the proof of Theorem \ref{thm:performance} for the 2-user 
problem \eqref{eq:u1}-\eqref{eq:u2}. 
Fix $q \in [1/4,3/4]$ and recall that $q=P[S[t]=1]$.  
Consider any \emph{statistics-unaware} 
algorithm for choosing $X[t]=(X_1[t],X_2[t]) \in \script{D}(S[t])$ over $t \in \{0, 1, 2, \ldots\}$, where
$\script{D}(S[t])$ is given in \eqref{eq:decision-set}.  In particular, the 
algorithm has no a-priori knowledge of $q$.   For each $t \in \{0,1, 2, \ldots\}$ define
$$x[t]= (x_1[t],x_2[t]) = \expect{(X_1[t], X_2[t])}$$
Fix $T$ as a positive integer.  Define $\overline{X}[T]$ as
the time average over the first $T$ slots 
\begin{equation*} 
\mbox{$\overline{X}[T]=\frac{1}{T}\sum_{t=0}^{T-1}(X_1[t], X_2[t])$}
\end{equation*} 
Taking expectations of both sides of the above equality and using the definition of $x[t]$ gives 
\begin{equation} \label{eq:so-that} 
\mbox{$\expect{\overline{X}[T]} = \frac{1}{T}\sum_{t=0}^{T-1} x[t]$}
\end{equation} 
Let $x^*=(x_1^*,x_2^*)$ be the optimal operating point  defined in \eqref{eq:x-star} of Lemma \ref{lem:derivative-properties}.  Let $\phi'(x^*)^{\transpose}$ denote the gradient at $x^*$ expressed as a row vector: 
\begin{align}
&\phi(x_1,x_2) = \log(1+x_1) + \log(1+x_2) \nonumber \\
&\implies  \phi'(x^*)^{\transpose} = \left[\frac{\partial \phi(x^*)}{\partial x_1}; \frac{\partial \phi(x^*)}{\partial x_2}\right] = \left[\frac{1}{1+x_1^*}; \frac{1}{1+x_2^*}  \right] \label{eq:gradient} 
\end{align}
By concavity of $\phi$  we have:
\begin{align}
 \phi(\expect{\overline{X}(T)}) &\overset{(a)}{\leq} \phi(x^*) + \phi'(x^*)^{\transpose}(\expect{\overline{X}(T)}-x^*)  \nonumber \\
 &\overset{(b)}{=} \phi(x^*) +  \phi'(x^*)^{\transpose} \cdot \frac{1}{T} \sum_{t=0}^{T-1}(x[t]-x^*) \nonumber\\
 &\overset{(c)}{=}\phi^* + \frac{1}{T}\sum_{t=0}^{T-1} \left[\frac{x_1[t]-x_1^*}{1+x_1^*} + \frac{x_2[t]-x_2^*}{1+x_2^*}\right] \nonumber \\
 &\overset{(d)}{=}\phi^* + \frac{1}{T}\sum_{t=0}^{T-1}\left[\frac{2r(x_1[t]-x_1^*)}{1+x_2^*} + \frac{(x_2[t]-x_2^*)}{(1+x_2^*)} \right]  \label{eq:subgradient-inequality2}
 \end{align}
 where (a) holds by the gradient inequality for concave functions; (b) 
 holds by   \eqref{eq:so-that}; (c) 
 holds by \eqref{eq:gradient} and the fact that $\phi(x^*)=\phi^*$ 
 for the vector $x^*=(x_1^*,x_2^*)$ defined in Lemma \ref{lem:derivative-properties}; 
 (d)  holds by \eqref{eq:derivative-equality}.  
 
 Now consider a particular $t \in \{0, 1, \ldots, T-1\}$.  Define $F[t]$ as the history of channel states over the first $t$ slots: 
 $$ F[t] = (S[0], S[1], \ldots, S[t-1])$$
 with $S[-1]$ defined as a nonrandom constant so that $F[0]$ provides no information about the channel.  Define $z_{F[t]}$ by 
 \begin{equation} \label{eq:ZF} 
z_{F[t]} = \expect{X_1[t]|S[t]=1, F[t]}
 \end{equation} 
 By Jensen's inequality we have 
 \begin{equation} \label{eq:ZF2} 
 z_{F[t]}^2 \leq \expect{X_1[t]^2|S[t]=1, F[t]} 
 \end{equation} 
 We have 
 \begin{align}
 &\expect{(X_1[t], X_2[t])|F[t]}\nonumber \\
 &\overset{(a)}{=}\expect{(X_1[t],X_2[t])|S[t]=0,F[t]}(1-q)\nonumber \\
 & \quad + \expect{(X_1[t],X_2[t])|S[t]=1, F[t]}q\nonumber \\
 &\overset{(b)}{=}(1,0)(1-q) + \expect{(X_1[t],1-X_1[t]^2)|S[t]=1, F[t]}q \nonumber \\
 &\overset{(c)}\leq (1,0)(1-q) + (z_{F[t]}, 1-z_{F[t]}^2)q \nonumber \\
 &= (1-q + qz_{F[t]}, q(1-z_{F[t]}^2)) \label{eq:other-hand1} 
 \end{align}
 where (a) holds by conditioning on the events $\{S[t]=1\}$ and $\{S[t]=0\}$ (which are independent of $F[t]$); (b) holds by the decision set structure 
  $\script{D}(S[t])$ in \eqref{eq:decision-set}; (c) holds as an entrywise inequality  by \eqref{eq:ZF} and \eqref{eq:ZF2}. Taking expectations of \eqref{eq:other-hand1} with respect to the random $F[t]$ and using the law of iterated expectations gives (using $\expect{X_i[t]} = x_i[t]$): 
\begin{equation} \label{eq:other-hand-3} 
(x_1[t], x_2[t]) \leq \left(1-q + q\expect{z_{F[t]}}, q-q\expect{z_{F[t]}^2}\right) 
\end{equation} 
  
   On the other
 hand, recall from Lemma \ref{lem:derivative-properties} that 
\begin{equation} \label{eq:other-hand2}
(x_1^*,x_2^*) = (1-q + qr, q(1-r^2)) 
\end{equation}
Using \eqref{eq:other-hand-3} and \eqref{eq:other-hand2} together gives 
\begin{align}
&2r(x_1[t]-x_1^*) + (x_2[t]-x_2^*) \nonumber \\
&\leq  2rq(\expect{z_{F[t]}}-r) + q(r^2-\expect{z_{F[t]}^2}) \nonumber \\
&= -q\expect{(z_{F[t]}-r)^2}\nonumber \\
&\leq -q\expect{\left([z_{F[t]}]_{h(0)}^{h(1)}-r\right)^2} \label{eq:star} 
\end{align}
where $[z_{F[t]}]_{h(0)}^{h(1)}$ projects $z_{F[t]}$ onto the interval $[h(0),h(1)]$ and the final inequality holds because 
we know $r \in [h(0),h(1)]$, and the distance between $z_{F[t]}$ and $r$ must be greater than or equal to their distances when projected onto the interval $[h(0),h(1)]$.   Now we know that $h:[0,1]\rightarrow [h(0),h(1)]$ is bijective and so we can define
\begin{equation} \label{eq:theta-def} 
\theta[t] = h^{-1}\left([z_{F[t]}]_{h(0)}^{h(1)}\right)
\end{equation} 
so that $[z_{F[t]}]_{h(0)}^{h(1)}=h(\theta[t])$.  Substituting this and $r=h(q)$ in \eqref{eq:star} gives 
$$2r(x_1[t]-x_1^*) + (x_2[t]-x_2^*) \leq -q\expect{\left(h(\theta[t])-h(q)\right)^2}$$
Substituting the above inequality into \eqref{eq:subgradient-inequality2}  yields: 
\begin{align} 
\phi(\expect{\overline{X}(T)}) &\leq \phi^* - \frac{q}{T(1+x_2^*)}\sum_{t=0}^{T-1} \expect{\left(h(\theta[t])-h(q)\right)^2}\nonumber \\
&\overset{(a)}{\leq} \phi^*- \frac{1}{8T}\sum_{t=0}^{T-1}\expect{(h(\theta[t])-h(q))^2}\nonumber\\
&\overset{(b)}{\leq} \phi^* - \frac{1}{8T}\sum_{t=1}^{T-1}\expect{(h(\theta[t])-h(q))^2} \label{eq:pre-expansion} 
\end{align}
where (a) holds by
the fact that  $x_2^* \in [0,1]$ and $q \in [1/4, 3/4]$ so that $q/(1+x_2^*) \geq 1/8$; (b) holds by neglecting the nonnegative term for $t=0$. By the expansion property of $h$ in \eqref{eq:expansion} we obtain 
\begin{equation} \label{eq:box}
\boxed{\phi(\expect{\overline{X}(T)}) \leq \phi^*- \frac{\beta^2}{8T}\sum_{t=1}^{T-1}\expect{(\theta[t]-q)^2}}
\end{equation} 
where $\beta=h'(1)\approx 0.2257$ is defined in Lemma \ref{lem:h-def}. 

\

{\bf Here is the crucial observation:}  For each slot $t \in \{1, 2, 3, \ldots\}$ we can view 
$\theta[t]$ as defined in \eqref{eq:theta-def}  
as a deterministic estimator of $q$ based on the $t$ observations $\{S[0], S[1], \ldots, S[t-1]\}$.

\

Indeed, $z_{F[t]}$ defined in \eqref{eq:ZF} is a deterministic function of $F[t]=(S[0], \ldots, S[t-1])$, 
and $\theta[t]$ as defined in \eqref{eq:theta-def} is determined by first projecting $z_{F[t]}$ 
to the interval $[h(0),h(1)]$ and then mapping the result through the continuous deterministic function 
$h^{-1}:[h(0),h(1)]\rightarrow [0,1]$.  In particular, $\theta[t] \in [0,1]$. With this observation, we get from Theorem \ref{thm:hazan-bernoulli-estimation}  that there exists a value $q \in [1/4, 3/4]$ such that: 
$$ \sum_{t=1}^{T-1} \expect{(\theta[t] - q)^2} \geq \Omega(\log(T)) \quad \forall T \in \{1, 2, 3, \ldots\} $$
Substituting this into \eqref{eq:box}  
gives
$$ \phi(\expect{\overline{X}[T]})  \leq \phi^*-  \Omega(\log(T)/T)$$
This establishes the inequality \eqref{eq:performance1} of Theorem \ref{thm:performance}. 

Finally, assuming $T \geq 2$ and rearranging \eqref{eq:box} gives 
\begin{equation}\label{eq:finally}
\frac{T\phi^*-T\phi(\expect{\overline{X}[T]})}{\log(T)} \geq \frac{\beta^2}{8\log(T)}\sum_{t=1}^{T-1}\expect{(\theta[t]-q)^2}
\end{equation} 
Again observing that $\{\theta[t]\}_{t=1}^{\infty}$ is a sequence of deterministic estimators of $q$, from Theorem \ref{thm:positive-measure} we know there is a set $\script{Q} \subseteq [1/4, 3/4]$ with measure $\mu(\script{Q})\geq 1/6$ such that for all $q \in \script{Q}$ we have 
$$ \limsup_{T\rightarrow\infty} \frac{1}{\log(T)} \sum_{t=1}^{T-1} \expect{(\theta[t]-q)^2} \geq \frac{3}{2^{10}}$$
Substituting this into \eqref{eq:finally} yields
$$\limsup_{T\rightarrow\infty} \frac{T\phi^*-T\phi(\expect{\overline{X}[T]})}{\log(T)} \geq \frac{\beta^2}{8}\frac{3}{2^{10}}$$
This completes the proof of Theorem \ref{thm:performance}. $\Box$

\subsection{Discussion}

The $O(\log(T)/T)$ achievability result derived in \cite{neely-frank-wolfe-ton} 
holds for smooth and concave 
utility functions and does not require strong concavity. The $\Omega(\log(T)/T)$ 
converse bound  of Theorem \ref{thm:performance}  was carried out using a smooth and \emph{strongly 
concave} utility function. This was intentional:  \emph{This shows that, for these opportunistic scheduling problems, 
strong concavity cannot improve the fundamental convergence time.}  This is surprising because strong convexity/concavity is known to significantly improve convergence time in other
optimization scenarios, including deterministic subgradient minimization \cite{nesterov-book}
and online convex programming \cite{online-convex}\cite{kale-universal}. 

%\item Strong concavity of the utility function $\phi(x_1,x_2)=\log(1+x_1)+\log(1+x_2)$ over its domain was not directly used in the converse analysis of this section. Specifically, the 
%gradient inequality \eqref{eq:subgradient-inequality} used only concavity and did not require strong concavity.  However,  \emph{nonlinearity} of   $\phi(x_1,x_2)$ was crucial: It was 
%used in Lemma \ref{lem:derivative-properties} to obtain a bijective expression for $r$ as a function of the $q$ parameter.

\subsection{Extension to other utility functions} 

Replace the utility function $\log(1+x_1)+ \log(1+x_2)$ with the more general 
function  $\phi:[0,1]^2\rightarrow\mathbb{R}$: 
$$ \phi(x_1,x_2) = \phi_1(x_1) + \phi_2(x_2)$$
where $\phi_1(x)$ and $\phi_2(x)$ are concave and strictly increasing over $[0,1]$. 
The converse proof can be repeated with mild additional 
assumptions on $\phi_1$  and $\phi_2$.   The main idea is to use the \emph{implicit function theorem} of real analysis to show existence of a strictly increasing and 
continuously differentiable function $h:(0,1)\rightarrow(0,1)$ (different from the $h$ function given for the log utility function 
in \eqref{eq:h}) such that for each $q \in (0,1)$, the value 
$h(q)$ is the $r$ value needed to define the  optimal operating point $(x_1^*, x_2^*) \in \script{C}$ associated with this new utility: 
$$ (x_1^*, x_2^*) = ((1-q) + qr, q(1-r^2))$$
It must also be shown that there is a $\beta>0$  such that $h'(q)\geq \beta$ for all $q \in [1/4, 3/4]$ so that the proof can proceed from \eqref{eq:pre-expansion} to \eqref{eq:box}. These properties are established in the next lemma. They allow the $\Omega(\log(T)/T)$  
converse proof to be repeated 
using the modified estimator: 
$$\theta[t] = h^{-1}\left([\expect{X_1[t]|S[t]=1}]_{h(1/4)}^{h(3/4)}\right)$$
\begin{lem} \label{lem:general-utility} (General utilities) 
Suppose $\phi_1(x)$ and $\phi_2(x)$ are twice continuously differentiable functions  that satisfy:
\begin{itemize} 
\item Assumption 1: $\phi_1'(x)>0$ and $\phi_2'(x)>0$ for all $x \in [0,1]$.
\item Assumption 2: $\phi_1''(x)<0$ and $\phi_2''(x)<0$ for all $x \in [0,1]$.
\item Assumption 3: $\phi_1'(1) < 2\phi_2'(0)$.\footnote{Note that Assumption 3 is implied by Assumption 2 in the special case $\phi_1(x)=\phi_2(x)$ for all $x \in [0,1]$.}
\end{itemize} 
Then for each $q \in (0,1)$ the equation:
\begin{equation} \label{eq:general-g-equation} 
 \phi_1'(1-q+qr) + \phi_2'(q(1-r^2))(-2r) = 0 
\end{equation} 
has a unique solution $r \in (0,1)$.  Further, there is a continuously differentiable function $h:(0,1)\rightarrow (0,1)$ with this property:  $(q,r)$ satisfies \eqref{eq:general-g-equation} if and only if $r=h(q)$, 
and there is a $\beta>0$ such that $h'(q)\geq \beta$ for all $q \in [1/4, 3/4]$. 
\end{lem}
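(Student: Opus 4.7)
My plan is to treat the equation \eqref{eq:general-g-equation} as the zero-set of a smooth function of two variables and apply the implicit function theorem, then extract the quantitative bound $\beta$ by a compactness argument. Concretely, I would define
\begin{equation*}
F(q,r) \;=\; \phi_1'(1-q+qr) \;-\; 2r\,\phi_2'(q(1-r^2)),
\end{equation*}
so that \eqref{eq:general-g-equation} reads $F(q,r)=0$. By Assumption 2 the maps $\phi_1',\phi_2'$ are $C^1$, hence $F$ is $C^1$ on an open neighborhood of $[0,1]^2$.

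The first step is existence and uniqueness of the solution $r$ for each fixed $q \in (0,1)$. Evaluating at the endpoints, Assumption 1 gives $F(q,0)=\phi_1'(1-q)>0$, while Assumption 3 gives $F(q,1)=\phi_1'(1)-2\phi_2'(0)<0$; since $F(q,\cdot)$ is continuous, a zero in $(0,1)$ exists by the intermediate value theorem. For uniqueness I would compute
\begin{equation*}
\frac{\partial F}{\partial r}(q,r) \;=\; q\,\phi_1''(1-q+qr)\;-\;2\phi_2'(q(1-r^2))\;+\;4qr^2\,\phi_2''(q(1-r^2)).
\end{equation*}
Each of the three terms is $\le 0$ on $(0,1)\times(0,1)$: the first and third are $\le 0$ by Assumption 2, and the middle term is strictly negative by Assumption 1. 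Hence $\partial F/\partial r < 0$ throughout, so $F(q,\cdot)$ is strictly decreasing on $[0,1]$ and the zero is unique.

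Next, I would apply the implicit function theorem. Since $\partial F/\partial r$ is nonzero at every $(q,r)\in (0,1)\times(0,1)$ with $F(q,r)=0$, the theorem produces, locally around each such point, a unique $C^1$ function expressing $r$ as a function of $q$. By the global uniqueness established above, these local functions patch together to define a single $C^1$ function $h:(0,1)\to(0,1)$ with $r=h(q)$ if and only if $F(q,r)=0$. Differentiating the identity $F(q,h(q))\equiv 0$ gives
\begin{equation*}
h'(q) \;=\; -\,\frac{\partial F/\partial q\,(q,h(q))}{\partial F/\partial r\,(q,h(q))}.
\end{equation*}

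Finally, to obtain $\beta>0$ with $h'(q)\ge \beta$ on $[1/4,3/4]$, I would show both that $h'$ is continuous on that interval and that it is strictly positive there. A direct computation yields
\begin{equation*}
\frac{\partial F}{\partial q}(q,r) \;=\; (r-1)\phi_1''(1-q+qr)\;-\;2r(1-r^2)\,\phi_2''(q(1-r^2)),
\end{equation*}
which is strictly positive on $(0,1)\times(0,1)$ since both $\phi_1''$ and $\phi_2''$ are negative while $r-1<0$ and $1-r^2>0$. Combined with $\partial F/\partial r<0$ this gives $h'(q)>0$ for all $q\in(0,1)$. Since $h$ is continuous, $h([1/4,3/4])$ is a compact subset of $(0,1)$, so $h'$ is a continuous strictly positive function on the compact set $[1/4,3/4]$ and therefore attains a positive minimum $\beta>0$. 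I expect the only mild obstacle is bookkeeping the signs so that Assumption 3 is actually required (it is only needed to force $F(q,1)<0$, ensuring the zero lies in the open interval $(0,1)$ rather than escaping to $r=1$); once that is verified, the rest follows mechanically from the implicit function theorem plus compactness.
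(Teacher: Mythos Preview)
Your proposal is correct and follows essentially the same approach as the paper: define the same function $F$ (called $g$ there), use the endpoint signs and $\partial F/\partial r<0$ for existence/uniqueness, invoke the implicit function theorem, and then positivity of $h'$ plus compactness on $[1/4,3/4]$ for $\beta$. The only cosmetic difference is that you obtain $h'(q)>0$ directly from the formula $h'=-F_q/F_r$ after computing $F_q>0$, whereas the paper differentiates the identity $g(q,h(q))=0$ and argues $h'(q)>0$ by contradiction; your route is slightly cleaner but not materially different.
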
 

\begin{proof} 
Define $g(q,r)$ for $q \in (0,1)$ and $r \in [0,1]$ by 
\begin{equation} \label{eq:gqr}
g(q,r)= \phi_1'(1-q+qr) + \phi_2'(q(1-r^2))(-2r) 
\end{equation} 
For $r \in (0,1)$, we see that $g(q,r)=0$ if and only if \eqref{eq:general-g-equation} holds.   Fix $q \in (0,1)$.  
Since $\phi_1$ and $\phi_2$ are twice differentiable,  
$g(q,r)$ is a continuous function of $r$. We have 
\begin{align*}
g(q,0) &\overset{(a)}{=} \phi_1'(1-q) > 0\\
g(q,1) &\overset{(b)}{=} \phi_1'(1) -2\phi_2'(0) <0
\end{align*}
where (a) holds by Assumption 1 and (b) by Assumption 3.   By the intermediate value theorem,  there must exist a value $r \in (0,1)$ such that $g(q,r)=0$. To show \emph{uniqueness} of this value $r \in (0,1)$, it suffices to show that $g(q,r)$ is strictly decreasing in $r$:
\begin{equation} \label{eq:partial}
\frac{\partial g(q,r)}{\partial r} = \underbrace{\phi_1''(1-q+qr)q}_{<0} + \underbrace{\phi_2''(q(1-r^2))q(2r)^2}_{<0} + \underbrace{(-2)\phi_2'(q(1-r^2))}_{<0} < 0
\end{equation} 
where the underbrace inequalities hold by Assumptions 1, 2, and the inequalities  $0<q<1$, $0<r<1$. 
Thus, uniqueness holds, and for each $q \in (0,1)$ we can define $h(q)$ as the unique value in $(0,1)$ 
such that $g(q,h(q))=0$. 
 Since $g(q,r)$ is continuously differentiable (with respect to both $q$ and $r$) over
the open set $(q,r) \in (0,1) \times (0,1)$,  and since $\partial g/\partial r \neq 0$,  the \emph{implicit function theorem} of real analysis can be applied to conclude the function $h(q)$ is continuously differentiable. 

It remains to prove existence of $\beta>0$ such that that $h'(q) \geq \beta$ for all $q \in [1/4, 3/4]$.
We have $g(q, h(q))=0$ for all $q \in (0,1)$ and so by \eqref{eq:gqr}: 
$$ \phi_1'(1-q+qh(q)) -2h(q) \phi_2'(q(1-h(q)^2))=0 \quad \forall q \in (0,1)$$
Taking a derivative with respect to $q$ gives
\begin{align} 
0&=\phi_1''(1-q+qh(q))(-1+h(q) + qh'(q)) \nonumber \\
& \quad -2h(q)\phi_2''(q(1-h(q)^2))(1 - h(q)^2 - 2qh(q)h'(q)) \nonumber \\
&\quad  -2h'(q) \phi_2'(q(1-h(q)^2)) \label{eq:take-limit} 
\end{align}
Suppose there is a $q \in (0,1)$ for which $h'(q)\leq 0$.  By Assumption 1 we know 
$$2h'(q)\phi_2'(q(1-h(q))^2) \leq  0$$ and so by \eqref{eq:take-limit}: 
\begin{align*}
0  &\geq \underbrace{\phi_1''(1-q+qh(q))}_{<0}\underbrace{(-1+h(q)+qh'(q))}_{<0} \\
&\underbrace{- 2h(q)\phi_2''(1(1-h(q)^2))}_{>0}\underbrace{(1-h(q)^2-2qh(q)h'(q))}_{>0}
\end{align*}
So $0>0$, a contradiction. 
Thus, $h'(q)$ is a  continuous and always positive function over $(0,1)$. It must have a strictly positive minimum value over the compact interval $[1/4,3/4]$, call this minimum $\beta$.
\end{proof} 

The discussion in Section \ref{section:linear} shows that the $\Omega(\log(T)/T)$ converse does not hold for \emph{linear} utility functions.  For intuition on how the proof of Lemma \ref{lem:general-utility} 
would fail with linear utilities, it is easy to see that if $\phi_1(x)=a_1x$, $\phi_2(x)=a_2x$ for some real numbers $a_1>0, a_2>0$, then the $r$ value that solves \eqref{eq:general-g-equation} does not depend on $q$ and hence $h'(q)=0$ for all $q$, so there is no $\beta>0$ such that $h'(q)\geq \beta$.
Assumption 2 of Lemma \ref{lem:general-utility} enforces nonlinearity. Assumption 2 implies 
that the $\phi$ function is strongly concave over the domain $[0,1]^2$.  
%Actually, 
%strong concavity is stronger than required and Assumption 2 can be relaxed in various ways, such as by allowing one of the functions $\phi_1, \phi_2$ to be concave but not strongly concave. For
%example, the lemma holds if Assumption 2 is relaxed to  $\phi_1''(x)\leq 0$ and $\phi_2''(x)< 0$ for all $x \in [0,1]$.   This is because the claim $\partial g/\partial r<0$ in \eqref{eq:partial} still holds under this relaxation, as does \eqref{eq:as-does}. 

\section{Conclusion} 

This paper establishes a converse bound of $\Omega(\log(T)/T)$ on the utility gap for 
opportunistic scheduling. This matches a recently established achievability bound 
of $O(\log(T)/T)$.
This means that $\log(T)/T$ is the optimal asymptotic behavior. 
The bound in this paper was 
proven for an example 2-user system with a strongly concave utility function.
This demonstrates the surprising the result that strong concavity of the utility function cannot improve the asymptotic convergence time for opportunistic scheduling systems. This is in contrast to other optimization  scenarios, such as online convex optimization, where strong convexity/concavity is known to significantly improve asymptotic convergence.  The converse proof constructed a nontrivial mapping of the opportunistic scheduling problem to a Bernoulli estimation problem and used a prior result on the regret associated with Bernoulli estimation.  The paper also develops a refinement on Bernoulli estimation to show that for any sequence of Bernoulli estimators, 
not only do probabilities exist for which the estimators perform poorly, but such probabilities have measure at least $1/6$ in the unit interval. This is used to show that for any opportunistic scheduling algorithm, if nature chooses a Bernoulli state  distribution by selecting the Bernoulli probability uniformly over the unit interval, the algorithm is limited by the $\Omega(\log(T)/T)$ bound 
with probability at least $1/6$. 

The converse bound of this paper was established for a simple 2-user system. This means that \emph{there exist} systems that are limited by the $\Omega(\log(T)/T)$ bound. 
The  techniques in this paper  link opportunistic scheduling to estimation problems and can 
likely be used in future work to investigate bounds on more general  networks, including networks with state variables $S[t]$ that are described by more complex distributions.  This motivates the following open questions: Can refined bounds be established for non-Bernoulli $S[t]$ processes?  Can more detailed coefficients of the $\log(T)/T$ curve be obtained in terms of simple parameters of the distribution on $S[t]$?  The Cramer-Rao bound of estimation theory allows bounds for non-Bernoulli variables that depend on 
\emph{Fisher information} of the underlying probability distribution. 
However, it is currently unclear how to  reduce a general opportunistic scheduling problem to a generalized (non-Bernoulli) 
estimation problem, and it is not clear how to incorporate 
Fisher information concepts to provide ``regret'' type bounds for networks. 

\section*{Appendix A --- A Refined Variation Inequality} 

This appendix refines a Lemma in \cite{hazan-kale-stochastic}
about the total variation distance 
associated with the measure of i.i.d. Bernoulli random 
variables.\footnote{Lemma 15 of \cite{hazan-kale-stochastic} contains a minor
constant factor error: It claims 
$p\log(p/q) + (1-p)\log((1-p)/(1-q)) \leq \epsilon^2/2$ for $q=p+\epsilon$ (a
counter-example is $\epsilon = 1/16$, $p=1/2$). This affects the statement
of Lemma 15 in \cite{hazan-kale-stochastic} 
by a constant factor that does not change the methodology or asymptotic
$\Omega(\log(N))$ results proven in that paper.}   
%This appendix  
%fixes a minor constant-factor error in the lemma of \cite{hazan-kale-stochastic} 
%and also simplifies the lemma by 
%removing a restriction on $|p-q|$ that was assumed there. 
Let $\Omega$ be a finite and nonempty 
sample space and consider the sigma algebra of all subsets of $\Omega$. 
Let $P[A]$ and $P'[A]$ be two probability measures defined for all  
$A \subseteq \Omega$.  Define the \emph{total variation distance} $v(P,P')$ by
$$ v(P,P') = \sup_{A \subseteq \Omega} |P[A] - P'[A]|$$
The Pinsker inequality states
\begin{equation} \label{eq:pinsker}  
v(P,P') \leq \min\left[\sqrt{\frac{1}{2} D_{KL}(P || P')}, \sqrt{\frac{1}{2} D_{KL}(P' || P)}\right] 
\end{equation} 
where $D_{KL}(P||P')$ is the Kullback-Leibler divergence (in nats) between $P$ and $P'$: 
$$ D_{KL}(P||P') = \sum_{x \in \Omega} P[\{x\}] \log\left(\frac{P[\{x\}]}{P'[\{x\}]}\right)$$

For a fixed positive integer $n$, define the sample space
$\{0,1\}^n$.  We view all $2^n$ outcomes $(x_1, ..., x_n) \in \{0,1\}^n$ 
as possible realizations of 
i.i.d. Bernoulli random variables $(X_1, ..., X_n)$.  For each $p \in [0,1]$, 
define $B_n^p$ as the probability measure on $\{0,1\}^n$  associated with i.i.d. Bernoulli random variables with $P[X_i=1]=p, P[X_i=0]=1-p$.  That is, 
$$ B_n^p[A] = \sum_{(x_1, ..., x_n) \in A} \left[\prod_{i=1}^n p^{x_i}(1-p)^{1-x_i}\right] \quad \forall A \subseteq \{0,1\}^n$$
The next lemma is a refinement of  Lemma 15 in \cite{hazan-kale-stochastic} that fixes a constant factor error and also removes a restriction on $|p-q|$ that was assumed there. 

\begin{lem} \label{lem:c-lemma} Fix $p,q \in [1/4, 3/4]$. Then
$$ v(B_p^n, B_q^n) \leq c|p-q|\sqrt{n}$$
where $c= \sqrt{8/3} \approx 1.6329932$. 
\end{lem}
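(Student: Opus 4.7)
The plan is to chain together three standard ingredients: Pinsker's inequality (which is already stated in the excerpt as \eqref{eq:pinsker}), the tensorization identity for KL divergence on product measures, and the classical upper bound of Bernoulli KL by the chi-squared divergence. The restriction $p,q \in [1/4,3/4]$ will only be used at the very end, to lower-bound $q(1-q)$ and turn the chi-squared expression into a clean quadratic in $|p-q|$.

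First, I would apply Pinsker's inequality \eqref{eq:pinsker} in the form $v(B_p^n,B_q^n) \leq \sqrt{\tfrac{1}{2} D_{KL}(B_p^n \| B_q^n)}$. Next, since $B_p^n$ and $B_q^n$ are $n$-fold products of the same one-dimensional Bernoulli marginals, the chain rule for KL divergence gives $D_{KL}(B_p^n \| B_q^n) = n\, D_{KL}(B_p^1 \| B_q^1)$; this identity follows directly from writing out the definition and using independence, so I would state it without further comment.

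The heart of the argument is to bound the one-step divergence
$$D_{KL}(B_p^1\|B_q^1) = p\log(p/q) + (1-p)\log((1-p)/(1-q)).$$
For this I would use the universal inequality $\log x \leq x-1$ on each logarithm, yielding $p\log(p/q) \leq p^2/q - p$ and $(1-p)\log((1-p)/(1-q)) \leq (1-p)^2/(1-q) - (1-p)$. Adding and putting over the common denominator $q(1-q)$, the numerator simplifies (this is the one computation I would actually perform carefully) to exactly $(p-q)^2$, giving the chi-squared bound
$$D_{KL}(B_p^1\|B_q^1) \leq \frac{(p-q)^2}{q(1-q)}.$$

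Finally, the interval restriction enters: for $q\in[1/4,3/4]$ we have $q(1-q)\geq 3/16$, so $1/(q(1-q)) \leq 16/3$. Assembling the three steps,
$$v(B_p^n,B_q^n)^2 \leq \tfrac{1}{2}\cdot n \cdot \tfrac{16}{3}(p-q)^2 = \tfrac{8}{3}\,n\,(p-q)^2,$$
and taking square roots gives the claimed bound with $c=\sqrt{8/3}$. No step is really an obstacle; the only point requiring any care is the algebraic simplification showing that the sum of the two $\log x \leq x-1$ bounds collapses to $(p-q)^2/(q(1-q))$, and the only place the assumption $p,q\in[1/4,3/4]$ is invoked is in the final uniform lower bound on $q(1-q)$. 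Note also that this argument does not actually require $p\in[1/4,3/4]$ for the chi-squared step, so symmetry in $p$ and $q$ is not needed beyond what Pinsker already provides.
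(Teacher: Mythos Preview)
Your proposal is correct and follows essentially the same approach as the paper: Pinsker's inequality, tensorization of KL over $n$ i.i.d.\ coordinates, the elementary bound $\log x \leq x-1$ applied to each term (the paper writes this as $\log(1+x)\leq x$ with $x=\epsilon/q$ and $x=-\epsilon/(1-q)$), and finally the bound $q(1-q)\geq 3/16$ for $q\in[1/4,3/4]$. The only cosmetic difference is that you package the intermediate inequality as the chi-squared bound $D_{KL}(B_p^1\|B_q^1)\leq (p-q)^2/(q(1-q))$ directly, whereas the paper sets $\epsilon=p-q$ and computes $\epsilon^2[1/q+1/(1-q)]$ before taking the supremum over $q$; the two computations are algebraically identical.
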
 
\begin{proof} 
Without loss of generality assume $p>q$ and define $\epsilon = p-q$. The 
Pinsker inequality \eqref{eq:pinsker} 
gives
\begin{equation} \label{eq:Pin1} 
 v(P,P') \leq \sqrt{\frac{1}{2}D_{KL}(B_n^p||B_n^q)}
 \end{equation} 
By basic properties of the measure of i.i.d. Bernoulli random variables it holds that
$$ D_{KL}(B_n^p||B_n^q) = n\left[p\log\left(\frac{p}{q}\right) + (1-p)\log\left(\frac{1-p}{1-q}\right)  \right]$$
To compute the right-hand-side of the above equality, we have
\begin{align*}
p\log(\frac{p}{q}) + (1-p)\log(\frac{1-p}{1-q}) &= p\log(1 + \frac{\epsilon}{q}) + (1-p)\log(1-\frac{\epsilon}{1-q}) \\
&\overset{(a)}{\leq} p\left(\frac{\epsilon}{q}\right) + (1-p)\left(\frac{-\epsilon}{1-q}\right)\\
&\overset{(b)}{=}\epsilon^2\left[\frac{1}{q} + \frac{1}{1-q}\right]\\
&\leq \epsilon^2 \sup_{q \in [1/4, 3/4]}\left[\frac{1}{q} + \frac{1}{1-q}\right]\\
&= \frac{16\epsilon^2}{3} 
\end{align*}
where (a) uses the inequality $\log(1+x)\leq x$ for all $x>-1$; (b) uses $p=q+\epsilon$. 
Hence
$$ D_{KL}(B_n^p||B_n^q) \leq \frac{16n \epsilon^2}{3} $$
Substituting this inequality into \eqref{eq:Pin1} proves the result.
\end{proof}

We now utilize the above refined lemma.   
Let $\{\hat{A}_n\}_{n=1}^{\infty}$ be an infinite sequence of estimation functions, as defined in Section \ref{section:estimation-functions}, so that 
\begin{equation} \label{eq:estimation-functions-here}
\hat{A}_n:[0,1) \times \{0,1\}^n \rightarrow [0,1]
\end{equation} 
Fix $p, q \in [0,1]$.  
Let $\{W_n^p\}_{n=1}^{\infty}$ and $\{W_n^q\}_{n=1}^{\infty}$ denote sequences of i.i.d. Bernoulli random variables with $P[W_n^p=1]=p$ and $P[W_n^q=1]=q$, respectively. 
Let $U$ be a random variable that is uniformly distributed over $[0,1)$ and that is independent of both 
$\{W_n^p\}_{n=1}^{\infty}$ and $\{W_n^q\}_{n=1}^{\infty}$. 
For $n \in \{1, 2, 3, \ldots\}$, let $A_n^p$  and $A_n^q$ 
denote the estimates of $p$ and $q$  based on $(U, W_1^p, \ldots, W_n^p)$ and $(U, W_1^q, \ldots, W_n^q)$, respectively: 
\begin{align}
A_n^p &= \hat{A}_n(U, W_1^p, W_2^p, \ldots, W_{n}^p) \quad \forall n \in \{1, 2, 3, \ldots\} \label{eq:Ap-here} \\
A_n^q &= \hat{A}_n(U, W_1^q, W_2^q, \ldots, W_n^q) \quad \forall n \in \{1, 2, 3, \ldots\} \label{eq:Aq-here} 
\end{align}
Fix $\alpha>0$.  We have 
\begin{align*}
\mathbb{E}_p[|A_n^p-p|^{\alpha}] &= \int_0^1 \mathbb{E}_p[|A_n^p-p|^{\alpha} \given U=u] du\\
\mathbb{E}_p[|A_n^q-q|^{\alpha}] &= \int_0^1 \mathbb{E}_q[|A_n^q-q|^{\alpha} \given U=u] du
\end{align*}
where $\mathbb{E}_p[\cdot]$ and $\mathbb{E}_q[\cdot]$ represent expectations with respect
to the probability distributions that form the random vectors $(U, W_1^p, \ldots, W_n^p)$ and $(U, W_1^q, \ldots, W_n^q)$, respectively. Since $U$ is independent of the samples, the conditional 
expectation $\mathbb{E}_p[|A_n^p-p|^{\alpha} \given U=u]$ is 
with respect to the probability measure $B_n^p$ associated only with 
the random vector of i.i.d. Bernoulli-$p$ variables $(W_1^p, \ldots, W_{n}^p)$.  Similarly, 
$\mathbb{E}_q[|A_n^q-q|^{\alpha} \given U=u]$ considers the \emph{same estimation
function} $\hat{A}_n(\cdot)$ but is with respect to the probability measure $B_n^q$ associated only with 
the random vector of i.i.d. Bernoulli-$q$ variables $(W_1^q, \ldots, W_{n}^q)$.   
For $n \in \{1, 2, 3, \ldots\}$, these expectations can be evaluated in terms of the measures $B_n^p$ and $B_n^q$ for which the Pinkser inequality applies. 

The following lemma generalizes Lemma 16 of \cite{hazan-kale-stochastic}, which treats mean square error, to treat general powers of the absolute error. The proof closely follows the structure developed
in \cite{hazan-kale-stochastic} but uses the refined lemma (Lemma \ref{lem:c-lemma} above) in a key place. 

\begin{lem}  \label{lem:Bernoulli-estimation} Fix $\alpha>0$. Fix any sequence of measurable  
estimation functions $\{\hat{A}_n\}_{n=1}^{\infty}$ 
of the form \eqref{eq:estimation-functions-here}.  
Let $p,q$ be probabilities that satisfy $p,q \in [1/4, 3/4]$.  Then for all $n \in \{1, 2, 3, \ldots\}$ we have 
\begin{equation} \label{eq:absolute-regret} 
E_p[|A_n^p-p|^{\alpha}] + E_q[|A_n^q-q|^{\alpha}] \geq \frac{|p-q|^{\alpha}}{2^{1+\alpha}}  \quad \mbox{whenever $|p-q|\leq \frac{1}{2c\sqrt{n}}$}
\end{equation} 
where $A_n^p$ and $A_n^q$ are defined by \eqref{eq:Ap-here} and $c=\sqrt{8/3}$.  
\end{lem}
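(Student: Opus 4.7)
The plan is a standard two-point testing reduction, following the structure of Lemma 16 in \cite{hazan-kale-stochastic} but adapted to general $\alpha>0$ and exploiting the refined total-variation bound of Lemma \ref{lem:c-lemma}. Without loss of generality assume $p>q$ and set the midpoint $m=(p+q)/2$, so that $p-m=m-q=|p-q|/2$. Then any estimator value at most $m$ lies at distance at least $|p-q|/2$ from $p$, and any value strictly exceeding $m$ lies at distance at least $|p-q|/2$ from $q$. This yields the Markov-style bounds
\begin{align*}
\mathbb{E}_p[|A_n^p-p|^{\alpha}] &\geq (|p-q|/2)^{\alpha}\, P_p[A_n^p\leq m],\\
\mathbb{E}_q[|A_n^q-q|^{\alpha}] &\geq (|p-q|/2)^{\alpha}\, P_q[A_n^q> m].
\end{align*}
So it suffices to show $P_p[A_n^p\leq m]+P_q[A_n^q>m]\geq 1/2$ whenever $|p-q|\leq 1/(2c\sqrt{n})$, since adding the two bounds then produces exactly $|p-q|^{\alpha}/2^{1+\alpha}$.

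To establish that probability inequality I would condition on the shared random seed $U=u$. For each fixed $u\in[0,1)$ the function $\hat{A}_n(u,\cdot)$ is a deterministic measurable map $\{0,1\}^n\to[0,1]$, so the level event $\{\hat{A}_n(u,w_1,\ldots,w_n)\leq m\}$ corresponds to a single subset $A_u\subseteq\{0,1\}^n$. The conditional probabilities are then the Bernoulli product measures evaluated on this common set: $B_n^p[A_u]$ and $B_n^q[A_u]$. Because $p,q\in[1/4,3/4]$, Lemma \ref{lem:c-lemma} applies and gives $|B_n^p[A_u]-B_n^q[A_u]|\leq v(B_n^p,B_n^q)\leq c|p-q|\sqrt{n}$, hence
$$ B_n^p[A_u] + (1-B_n^q[A_u]) \;\geq\; 1 - c|p-q|\sqrt{n} \;\geq\; \tfrac{1}{2} $$
under the hypothesis $|p-q|\leq 1/(2c\sqrt{n})$. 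Integrating this pointwise estimate over $u\in[0,1)$ against the uniform law of $U$, and using independence of $U$ from the Bernoulli samples, delivers $P_p[A_n^p\leq m]+P_q[A_n^q>m]\geq 1/2$.

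Substituting into the two Markov-style bounds and adding then yields \eqref{eq:absolute-regret}. The main conceptual step to get right is the handling of randomization: one must recognize that conditioning on $U=u$ reduces the two differently-distributed estimators to a \emph{common} deterministic measurable test, after which Lemma \ref{lem:c-lemma} applies with a single subset $A_u$ and a bound that is uniform in $u$ and so survives integration over the seed. The smallness hypothesis $|p-q|\leq 1/(2c\sqrt{n})$ is precisely the threshold that converts the TV inequality into a nontrivial uniform lower bound on the sum of error probabilities, and the proof accommodates any $\alpha>0$ because $\alpha$ enters only through the outer factor $(|p-q|/2)^{\alpha}$ and not through the testing argument itself.
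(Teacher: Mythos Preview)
Your proposal is correct and rests on the same information-theoretic core as the paper's proof: condition on the seed $U=u$ to obtain a common deterministic test, threshold at the midpoint $m=(p+q)/2$, and invoke Lemma \ref{lem:c-lemma} to bound the discrepancy between $B_n^p$ and $B_n^q$ on the resulting subset of $\{0,1\}^n$. The substantive difference is structural rather than conceptual. The paper argues by contradiction: it assumes \eqref{eq:absolute-regret-claim} fails, introduces an auxiliary splitting parameter $\theta\in(0,1)$, pushes Markov's inequality forward on each term separately, and derives $|P_p[C]-P_q[C]|>1/2$ to contradict the total-variation bound. Your direct route instead lower-bounds each expectation by $(|p-q|/2)^{\alpha}$ times the corresponding one-sided tail probability and then uses the total-variation bound \emph{constructively} to show the two tails sum to at least $1/2$. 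This is slightly cleaner---it dispenses with both the contradiction framing and the $\theta$ parameter---and makes transparent why the exponent $\alpha$ plays no role in the testing step. Both arguments yield exactly the same constant $1/2^{1+\alpha}$.
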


\begin{proof} Fix $n \in \{1, 2, 3, \ldots\}$. Define $\epsilon = |p-q|$ and 
assume that $\epsilon \leq \frac{1}{2c\sqrt{n}}$.  
It suffices to prove the following claim: 
For all $u \in [0,1)$ we have 
\begin{equation} \label{eq:absolute-regret-claim} 
E_p[|A_n^p-p|^{\alpha} \given U=u] + E_q[|A_n^q-q|^{\alpha} \given U=u] \geq \frac{\epsilon^{\alpha}}{2^{1+\alpha}} 
\end{equation} 
Without loss of generality assume $q\geq p$ so that  $p=q+\epsilon$. 
If $\epsilon=0$ then \eqref{eq:absolute-regret-claim} trivially holds. Assume $\epsilon>0$ and suppose \eqref{eq:absolute-regret-claim} is false (we reach a contradiction). Then 
\begin{equation} \label{eq:sufficiently-small} 
E_p[|A_n^p-p|^{\alpha} \given U=u] + E_q[|A_n^q-q|^{\alpha} \given U=u] < \frac{\epsilon^{\alpha}}{2^{1+\alpha}} 
\end{equation} 
Thus, there is a constant $\theta \in (0,1)$ such that:\footnote{Indeed from \eqref{eq:sufficiently-small}: If $E_p[|A_n^p-p|^{\alpha}]=0$ then any $\theta \in (0,1)$ satisfies \eqref{eq:fails1} and we can choose $\theta \in (0,1)$  sufficiently close to $0$  to ensure \eqref{eq:fails2}.  Else, if $E_q[|A_n^q-q|^{\alpha}]=0$ then any $\theta \in (0,1)$ satisfies
\eqref{eq:fails2} and we can choose $\theta \in (0,1)$ sufficiently close to $1$ to ensure
\eqref{eq:fails1}. Else, define $\theta \in (0,1)$ by $\theta= \frac{E_p[|A_n^p-p|^{\alpha}]}{E_p[|A_n^p-p|^{\alpha}] +E_q[|A_n^q-q|^{\alpha}]}$ and note that both \eqref{eq:fails1}-\eqref{eq:fails2} hold.}
\begin{align}
E_p[|A_n^p-p|^{\alpha} \given U=u]  &<  \frac{\theta \epsilon^{\alpha}}{2^{1+\alpha}} \label{eq:fails1} \\
E_q[|A_n^q-q|^{\alpha} \given U=u] &<\frac{(1-\theta) \epsilon^{\alpha}}{2^{1+\alpha}}\label{eq:fails2} 
\end{align}
Following the technique in \cite{hazan-kale-stochastic},  
applying the Markov inequality to \eqref{eq:fails1} and \eqref{eq:fails2} gives
\begin{align*}
P_p[|A_n^p-p|\geq \epsilon/2 \given U=u] &<  \frac{\theta}{2}   \\
P_q[|A_n^q-q|\geq \epsilon/2 \given U=u] &<  \frac{1-\theta}{2}
\end{align*}
where $P_p[\cdot \given U=u]$ and $P_q[\cdot \given U=u]$ represent probabilities under the probability measures $B_n^p$ and $B_n^q$, respectively.  For simplicity of notation, for the remainder of 
this proof we suppress the explicit ``$U=u$'' conditioning, with the understanding that all probabilities are implicitly conditioned on $U=u$. With this 
simplified notation the above inequalities become
\begin{align}
P_p[|A_n^p-p|\geq \epsilon/2 ] &<  \frac{\theta}{2} \label{eq:fails1-mod} \\
P_q[|A_n^q-q|\geq \epsilon/2  ] &<  \frac{1-\theta}{2} \label{eq:fails2-mod} 
\end{align}
From \eqref{eq:fails1-mod} we obtain
\begin{equation}
P_p[A_n^p\geq p+\epsilon/2] <  \frac{\theta}{2} \label{eq:foo-parallel} 
 \end{equation} 
From \eqref{eq:fails2-mod} we obtain
\begin{align}
&P_q[|A_n^q-q|<\epsilon/2] > \frac{1}{2}+\frac{\theta}{2}   \nonumber \\
&\implies P_q[A_n^q > q-\epsilon/2]> \frac{1}{2}+\frac{\theta}{2}  \nonumber \\
&\overset{(a)}{\implies} P_q[A_n^q > p+\epsilon/2] >\frac{1}{2}+\frac{\theta}{2}   \label{eq:foo} 
\end{align}
where (a) holds because $q= p+\epsilon$.  Now define the set $C\subseteq \{0,1\}^n$ as follows: 
$$ C = \{(x_1, \ldots, x_{n}) \in \{0,1\}^{n} : \hat{A}_n(x_1, \ldots, x_{n}) >p+\epsilon/2\}$$
Then \eqref{eq:foo-parallel} implies $P_p[C] < \theta/2$ and \eqref{eq:foo} implies
$P_q[C] > 1/2 + \theta/2$ and so 
\begin{equation} \label{eq:baz}  
|P_p[C] - P_q[C]|> 1/2
\end{equation} 
Then
\begin{align*}
1/2 &\overset{(a)}{<}  |P_p[C]-P_q[C]| \\
&\overset{(b)}{\leq} v(B_n^p, B_n^q)\\
&\overset{(c)}{\leq} c|p-q|\sqrt{n}\\
&= c\epsilon \sqrt{n}\\
&\overset{(d)}{\leq} 1/2
\end{align*}
where (a) holds by \eqref{eq:baz}; (b) holds by definition of $v(\cdot)$ as the supremum 
absolute error over all possible events (including the event $C$); (c) follows by Lemma \ref{lem:c-lemma}; (d) follows because we have assumed $c\epsilon \sqrt{n}\leq 1/2$. This gives the contradiction.
\end{proof}

\section*{Appendix B --- Proof of Theorem \ref{thm:positive-measure}}

This appendix proves Theorem \ref{thm:positive-measure}. For $\alpha >0$ define 
$$ V_m(\alpha)= \sum_{n=1}^m (1/n)^{\alpha/2} \quad \forall m \in \{1, 2, 3, \ldots\} $$
Notice that
$$ V_m(\alpha) \geq  \int_1^{m+1} (1/t)^{\alpha/2} dt =  \left\{ \begin{array}{ll}
\log(m+1) &\mbox{ if $\alpha=2$} \\
\frac{(m+1)^{1-\alpha/2}-1}{1-\alpha/2} & \mbox{ if $\alpha \neq 2$} 
\end{array}
\right.$$
Fix $\alpha  \in [0,2)$.  Fix any sequence of measurable 
estimation functions $\{\hat{A}_n\}_{n=1}^{\infty}$ of the form \eqref{eq:estimation-functions}. 
Define $\script{Q}$ as the set of all $p \in [1/4, 3/4]$ such that 
\begin{equation}\label{eq:limsup2}
\limsup_{m\rightarrow\infty} \left[\frac{1}{V_m(\alpha)}\sum_{n=1}^m \mathbb{E}_p[|A_n^p-p|^{\alpha}]\right] \geq \frac{1}{c^{\alpha}2^{3+2\alpha}}
\end{equation} 
Let $\mu(\script{Q})$ denote the total Lebesgue measure of the set $\script{Q}$.

We first show that $\script{Q}$ is Lebesgue measurable: Let $\script{X}_n$ be the set of all $2^n$ binary-valued vectors of size $n$. For each random seed $u \in [0,1)$, 
each function $\hat{A}_n(u,x_1, ..., x_n)$ assigns each vector $(x_1, ..., x_n) \in \script{X}_n$ to a 
real number in the interval $[0,1]$. So 
$$\expect{|A_n^p-p|^{\alpha}|U=u} = \sum_{\vec{x} \in \script{X}_n} |\hat{A}_n(u,\vec{x})-p|^{\alpha}\prod_{i=1}^np^{x_i}(1-p)^{1-x_i}$$
and this is a continuous and bounded function of $p \in [0,1]$. Then by the law of iterated expectations
\begin{align*}
 \expect{|A_n^p-p|^{\alpha}} &=  \sum_{\vec{x} \in \script{X}_n} \prod_{i=1}^np^{x_i}(1-p)^{1-x_i}\int_0^1|\hat{A}_n(u,\vec{x})-p|^{\alpha}du
 \end{align*}
where the integral is a well defined real number because $\hat{A}_n(u,\vec{x})$ is a  bounded and measurable function of $u$ and hence $|\hat{A}_n(u,\vec{x})-p|$ can be integrated with respect to the uniform distribution over  $u \in [0,1]$.   It follows that $\expect{|A_n^p-p|^{\alpha}}$ is 
Lebesgue-measurable in $p$.  The weighted sum of Lebesgue-measurable functions is again Lebesgue-measurable, and the set of all $p \in [1/4,3/4]$ under which 
the limsup of these functions exceeds a threshold is again Lebesgue-measurable.  Thus, $\script{Q}$ is indeed Lebesgue-measurable.

We want to show  $\mu(\script{Q})\geq 1/6$.

\begin{proof} (Theorem \ref{thm:positive-measure}) 
Fix $\alpha \in (0, 2]$. 
For each $n \in \{1, 2, 3, \ldots\}$ define 
\begin{equation} \label{eq:epsilon-n} 
\epsilon[n] = \frac{1}{2c\sqrt{n}}
\end{equation}  
with $c=\sqrt{8/3}$.  
It follows by Lemma \ref{lem:Bernoulli-estimation} that if $p,q \in [1/4, 3/4]$ such that $|p-q|= \epsilon[n]$ then
\begin{equation}\label{eq:p-minus-q}
 \mathbb{E}_p[|A_n^p-p|^{\alpha}] + E_q[|A_n^q-q|^{\alpha}] \geq \frac{\epsilon[n]^{\alpha}}{2^{1+\alpha}} 
 \end{equation} 
For each $p \in [1/4, 3/4]$ define: 
\begin{equation}\label{eq:fp}
f_n(p) = \min\left[\mathbb{E}_p[|A_n^p-p|^{\alpha}], \frac{\epsilon[n]^{\alpha}}{2^{1+\alpha}}\right]
\end{equation} 
We first claim that if $p,q \in [1/4, 3/4]$ and $|p-q|=\epsilon[n]$ then:
\begin{equation} \label{eq:f-q-p}
f_n(p) + f_n(q) \geq \frac{\epsilon[n]^{\alpha}}{2^{1+\alpha}} 
\end{equation} 
Indeed, if either $f_n(p)\geq \epsilon[n]^{\alpha}/2^{1+\alpha}$ or $f_n(q)\geq \epsilon[n]^{\alpha}/2^{1+\alpha}$ then the result is trivially true, and otherwise the result holds by \eqref{eq:p-minus-q}.  
Integrating the nonnegative function $f_n(p)$ gives 
\begin{align*}
\frac{1}{1/2}\int_{1/4}^{3/4} f_n(p)dp &\overset{(a)}{=} \int_{1/4}^{3/4} f_n(p)dp + \int_{1/4}^{3/4}f_n(q)dq\\
&\overset{(b)}{\geq} \int_{1/4}^{3/4-\epsilon[n]} f_n(p)dp + \int_{1/4+\epsilon[n]}^{3/4} f_n(q)dq\\
&= \int_{1/4}^{3/4-\epsilon[n]} [f_n(p)+f_n(p+\epsilon[n])]dp \\
&\overset{(c)}{\geq} \int_{1/4}^{3/4-\epsilon[n]} \frac{\epsilon[n]^{\alpha}}{2^{1+\alpha}}dp\\
&= \frac{\epsilon[n]^{\alpha}}{2^{1+\alpha}}(1/2-\epsilon[n])\\
&\overset{(d)}{=} \frac{(1/n)^{\alpha/2}(1- \frac{1}{cn^{1/2}})}{c^{\alpha}2^{2+2\alpha}} 
\end{align*}
where (a) holds by simply doubling the integral; for inequality (b) we note that $\epsilon[n] <1/2$ for all $n \in \{1, 2, 3, \ldots\}$ and 
so $3/4-\epsilon[n]>1/4$ and $1/4+\epsilon[n]<3/4$; 
(c) holds by \eqref{eq:f-q-p}; (d) holds by definition 
$\epsilon[n]$ in \eqref{eq:epsilon-n}. 
Summing over $n \in \{1, \ldots, m\}$ gives 
\begin{equation}\label{eq:above} 
\frac{1}{1/2}\int_{1/4}^{3/4}\left[\sum_{n=1}^m f_n(p)\right]dp  \geq \frac{V_m(\alpha)}{c^{\alpha}2^{2+2\alpha}} - \frac{V_m(\alpha+1)}{c^{\alpha+1}2^{2+2\alpha}} 
\end{equation} 

Now let $Z$ be a random variable that is independent of all else and 
is uniform over $[1/4, 3/4]$. Define $H_m=\sum_{n=1}^m f_n(Z)$. Inequality \eqref{eq:above} 
can be interpreted as
\begin{equation} \label{eq:interpret}
\expect{H_m}  \geq
\frac{V_m(\alpha)}{c^{\alpha}2^{2+2\alpha}} - \frac{V_m(\alpha+1)}{c^{\alpha+1}2^{2+2\alpha}} 
\end{equation}
Inequality  \eqref{eq:fp} implies a deterministic bound on $H_m$: 
\begin{equation*} 
f_n(Z) \leq \frac{\epsilon[n]^{\alpha}}{2^{1+\alpha}}= \frac{(1/n)^{\alpha/2}}{c^{\alpha}2^{1+2\alpha}}
\end{equation*} 
Summing the above over $n \in \{1, \ldots, m\}$ gives
\begin{equation} \label{eq:H-bound}
H_m \leq \frac{V_m(\alpha)}{c^{\alpha}2^{1+2\alpha}}
\end{equation} 
Define $A_m(\alpha)$ as the following event: 
$$ A_m(\alpha) = \left\{H_m \geq \frac{V_m(\alpha)}{c^{\alpha}2^{3+2\alpha}}\right\}$$
Define $A_m(\alpha)^c$ as the complement of this event. Then
\begin{equation}
\expect{H_m} =\expect{H_m \given A_m(\alpha)^c}(1-P[A_m(\alpha)])+ \expect{H_m \given A_m(\alpha)}P[A_m(\alpha)] \label{eq:pre-part} 
\end{equation} 
However
\begin{align}
\expect{H_m|A_m(\alpha)^c} &\leq \frac{V_m(\alpha)}{c^{\alpha}2^{3+2\alpha}} \label{eq:part1}\\
\expect{H_m|A_m(\alpha)} &\leq  \frac{V_m(\alpha)}{c^{\alpha}2^{1+2\alpha}}\label{eq:part2}
\end{align}
where \eqref{eq:part1} holds by definition of the event $A_m(\alpha)$ and \eqref{eq:part2} uses the deterministic upper
bound on $H_m$ in \eqref{eq:H-bound}.   Substituting \eqref{eq:part1} and \eqref{eq:part2} into \eqref{eq:pre-part} gives  
\begin{align*}
\expect{H_m}  \leq \frac{V_m(\alpha)}{c^{\alpha}2^{3+2\alpha}} + \frac{(3/4)V_m(\alpha)}{c^{\alpha}2^{1+2\alpha}}P[A_m(\alpha)]
\end{align*}
Substituting this inequality into \eqref{eq:interpret} gives
$$ \frac{V_m(\alpha)}{c^{\alpha}2^{2+2\alpha}} - \frac{V_m(\alpha+1)}{c^{\alpha+1}2^{2+2\alpha}}  \leq  \frac{V_m(\alpha)}{c^{\alpha}2^{3+2\alpha}} + \frac{(3/4)V_m(\alpha)}{c^{\alpha}2^{1+2\alpha}}P[A_m(\alpha)]$$
Rearranging terms of the above inequality gives 
$$P[A_m(\alpha)]\geq \frac{1}{3}  - \frac{(2/3)V_m(\alpha+1)}{cV_m(\alpha)}\quad \forall m \in \{1, 2, 3, \ldots\} \nonumber $$
Taking limits as $m\rightarrow\infty$ and using the fact that $\alpha \in (0,2]$ implies  $\lim_{m\rightarrow\infty} \frac{V_m(\alpha+1)}{V_m(\alpha)}=0$ yields: 
$$\limsup_{m\rightarrow\infty} P[A_m(\alpha)] \geq 1/3$$
 Thus
\begin{align*}
1/3&\leq \limsup_{m\rightarrow\infty} P[A_m(\alpha)] \\
&\overset{(a)}{\leq} \limsup_{m\rightarrow\infty} P[\cup_{i=m}^{\infty} A_i(\alpha)] \\
&\overset{(b)}{=} \lim_{m\rightarrow\infty} P[\cup_{i=m}^{\infty} A_i(\alpha)] \\
&\overset{(c)}{=} P[A_m(\alpha) \quad i.o]
\end{align*} 
where (a) holds because $A_m(\alpha) \subseteq \cup_{i=m}^{\infty} A_i(\alpha)$; (b) and 
(c) hold by monotonicity of probability: 
\begin{align*}
&\left(\cup_{i=m}^{\infty} A_i(\alpha)\right)\searrow \{A_m(\alpha) \quad i.o\}\\
\implies & \lim_{m\rightarrow\infty}P[\cup_{i=m}^{\infty} A_i(\alpha)] = P[A_m(\alpha) \quad i.o.]
\end{align*}
where the notation ``i.o.''  
represents ``infinitely often,'' that is, $P[A_m(\alpha) \quad i.o.]$ 
is the probability  that the event $A_m(\alpha)$ occurs for an infinite number of 
 indices $m$. Thus 
 $$1/3 \leq P[A_m(\alpha) \quad i.o.]  $$
 However, by definition of the events $A_m(\alpha)$ we have 
 $$ \{A_m(\alpha) \quad i.o.\} \subseteq \left\{\limsup_{m\rightarrow\infty} \frac{H_m}{V_m(\alpha)} \geq \frac{1}{c^{\alpha}2^{3+2\alpha}}\right\}$$ 
 Thus
\begin{equation} 
1/3 \leq P\left[\limsup_{m\rightarrow\infty} \frac{H_m}{V_m(\alpha)}\geq \frac{1}{c^{\alpha}2^{3+2\alpha}}\right] \label{eq:thus} 
 \end{equation} 
Finally we note by definition of $f_n(p)$ in \eqref{eq:fp} that 
 $$ H_m = \sum_{n=1}^m f_n(Z)\leq \sum_{n=1}^m E_Z[|A_n^Z-Z|^{\alpha}] $$
Substituting this into \eqref{eq:thus} gives
$$ P\left[\limsup_{m\rightarrow\infty} \frac{\sum_{n=1}^m E_Z[|A_n^Z-Z|^{\alpha}]}{V_m(\alpha)}\geq \frac{1}{c^{\alpha}2^{3+2\alpha}} \right] \geq 1/3$$
Since $Z$ is chosen uniformly over the size-$(1/2)$ interval $[1/4, 3/4]$ it follows
that the measure of all values $p \in [1/4, 3/4]$ for which the above $\limsup$ inequality 
holds is at least $1/6$, that is, $\mu(\script{Q})\geq 1/6$. 
\end{proof} 

\section*{Appendix C --- Tightness of Bounds for Bernoulli Estimation} 

Let $\{W_n\}_{n=1}^{\infty}$ be i.i.d. Bernoulli random variables with $P[W_n=1]=p$, where $p \in [0,1]$ is an unknown parameter. Theorem \ref{thm:positive-measure} establishes a lower bound on regret 
for arbitrary (biased or unbiased) estimators of $p$. This appendix shows that the simple (and unbiased) estimators $\{A_n\}_{n=1}^{\infty}$ defined by 
\begin{equation} \label{eq:simple} 
A_n = \frac{1}{n}\sum_{i=1}^n W_i \quad \forall n \in \{1, 2, 3, \ldots\}
\end{equation} 
achieve the regret bounds of Theorem \ref{thm:positive-measure} to within a constant factor. 
Fix $\alpha \in (0,2]$. 
Define 
$$ regret_n = \sum_{m=1}^n\expect{|A_m-p|^{\alpha}}$$

\begin{itemize} 

\item (Case $\alpha =2$)
For each $n \in \{1, 2, 3, ...\}$ we have from \eqref{eq:simple}: 
\begin{equation} \label{eq:MSE-average}
 \expect{(A_n-p)^2} = \expect{\left(\frac{1}{n}\sum_{m=1}^n(W_n-p)\right)^2} = \frac{p(1-p)}{n}
\end{equation} 
and so 
\begin{align*}
regret_n &= p(1-p)\sum_{m=1}^n\frac{1}{m} \\
&\overset{(a)}{\leq} \frac{1}{4}\sum_{m=1}^n \frac{1}{m} \\
&\leq \frac{1}{4}\left[ 1+ \int_{1}^{n} \frac{1}{t}dt\right]\\
&=\frac{1}{4} + \frac{1}{4}\log(n) \quad \forall n \in \{1, 2, 3, \ldots\} 
\end{align*}
where (a) holds because $p(1-p)\leq 1/4$ whenever $p \in [0,1]$.  Thus 
$$ \limsup_{n\rightarrow\infty} \left[\frac{regret_n}{\log(n+1)}\right] \leq \frac{1}{4}$$
This value $1/4$ is within a factor of $85.4$ from the  lower bound $3/2^{10}$ of Theorem \ref{thm:positive-measure}.

\item (Case $0< \alpha < 2$) 
Fix $m \in \{1, 2, 3, \ldots\}$.  We have
\begin{align}
\expect{|A_m-p|^\alpha} &= \expect{(|A_m-p|^2)^{\alpha/2}}\nonumber \\
&\overset{(a)}{\leq} \expect{|A_m-p|^2}^{\alpha/2}\nonumber \\
&\overset{(b)}{=} (\frac{p(1-p)}{m})^{\alpha/2} \label{eq:case2-reuse} 
\end{align}
where (a) holds by Jensen's inequality on the concave function $x^{\alpha/2}$; (b) holds by \eqref{eq:MSE-average}.  Thus
\begin{align*}
regret_n &= \sum_{m=1}^n \expect{|A_m-p|^{\alpha}}\\
&= (p(1-p))^{\alpha/2}\sum_{m=1}^n \frac{1}{m^{\alpha/2}} \\
&\leq (1/4)^{\alpha/2}\left[1 + \int_1^n 1/t^{\alpha/2}dt\right]\\
&= \left[\frac{(1/2)^{\alpha}}{1-\alpha/2}\right]\left[n^{1-\alpha/2}- \frac{\alpha}{2}\right] \quad \forall n \in \{1, 2, 3, \ldots\} 
\end{align*}
Thus
$$ \limsup_{n\rightarrow\infty}\left[ \frac{regret_n}{(n+1)^{1-\alpha/2}-1}\right] \leq \frac{(1/2)^{\alpha}}{1-\alpha/2}$$
This value is within a factor $\frac{2^{3 + \frac{5\alpha}{2}}}{3^{\alpha/2}}$ of the lower bound $\frac{1}{c^{\alpha}2^{3+2\alpha}(1-\alpha/2)}$ of Theorem \ref{thm:positive-measure} (where $c = \sqrt{8/3}$).  For $\alpha=1$ the 
constant factor gap is $32\sqrt{2/3} \approx 26.12789$.   For $\alpha\rightarrow 0^+$ the 
constant factor gap converges to $8$.
\end{itemize} 

% ------------------------------------------------------------------------
%GATHER{Xbib.bib}   % For Gather Purpose Only
%GATHER{Paper.bbl}  % For Gather Purpose Only
\bibliographystyle{unsrt}
\bibliography{../../../../../../../latex-mit/bibliography/refs}
\end{document}